
\documentclass[11pt]{amsart}
\usepackage {enumerate}
\usepackage{setspace}
\usepackage{caption}
\usepackage{mathrsfs}
\usepackage{hyperref}
\usepackage{esint}
\usepackage{amssymb}
\usepackage{url}

\usepackage{color}
\usepackage{lipsum}

\newtheorem{theorem}{Theorem}[section]
\newtheorem{lemma}[theorem]{Lemma}

\newtheorem{proposition}[theorem]{Proposition}

\newtheorem{conjecture}[theorem]{Conjecture}
\numberwithin{equation}{section}

\theoremstyle {definition}
\newtheorem{definition}[theorem]{Definition}
\newtheorem{remark}[theorem]{Remark}

\DeclareMathOperator{\lip}{Lip}


\begin{document}
\title[Riemannian Penrose inequality without horizon]{Riemannian Penrose inequality without horizon in dimension three}
\author{Jintian Zhu}
\address{Key Laboratory of Pure and Applied Mathematics, School of Mathematical Sciences, Peking University, Beijing, 100871, People's Republic of China}
\email{zhujintian@bicmr.pku.edu.cn}
\address{Institute for Theoretical Sciences, Westlake University, 600 Dunyu Road, 310030, Hangzhou, Zhejiang, People's Republic of China}
\email{zhujintian@westlake.edu.cn}

\begin{abstract}
Based on the $\mu$-bubble method we are able to prove the following version of Riemannian Penrose inequality {\it without horizon}: if $g$ is a complete metric on $\mathbb R^3\setminus\{O\}$ with nonnegative scalar curvature, which is asymptotically flat around the infinity of $\mathbb R^3$, then the ADM mass $m$ at the infinity of $\mathbb R^3$ satisfies
$
m\geq \sqrt{\frac{A_g}{16\pi}},
$
where $A_g$ is denoted to be the area infimum of embedded closed surfaces homologous to $\mathbb S^2(1)$ in $\mathbb R^3\setminus\{O\}$. Moreover, the equality holds if and only if there is a strictly outer-minimizing minimal $2$-sphere such that the region outside is isometric to the half Schwarzschild manifold with mass $\sqrt{\frac{A_g}{16\pi}}$.
\end{abstract}
\subjclass[2020]{53C21, 53C24}

\maketitle

\section{Introduction}
A complete Riemannian $n$-manifold $(M,g)$ is said to be asymptotically flat (AF) if $n\geq 3$ and there is a compact subset $K\subset M$ such that
\begin{itemize}
\item[(i)] the complement $M-K$ has finitely many ends, each of which is diffeomorphic to $\mathbb R^n-B_1$ with $B_1=\{x\in \mathbb R^n:|x|<1\}$,
\item[(ii)] in the Euclidean coordinate chart above the metric $g$  satisfies
\begin{equation}\label{Eq: decay}
|g_{ij}-\delta_{ij}|+|x||\partial g_{ij}|+|x|^2|\partial^2 g_{ij}|=O\left(|x|^{2-n}\right)\mbox{ as }|x|\to +\infty,
\end{equation}
\item[(iii)] the scalar curvature $R(g)$ belongs to $L^1(M,g)$.
\end{itemize}
For convenience, an end $(\mathcal E,g)$ with asymptotics \eqref{Eq: decay} will be called an AF end.
 Recall that the
Arnowitt-Deser-Misner (ADM) mass (see \cite{ADM1961}) associated to each AF end $\mathcal E$ of $(M,g)$ is defined to be
$$
m(M,g,\mathcal E)=  \frac1{2(n-1)\omega_{n-1}}\lim_{\rho\to +\infty} \int_{S_\rho} (\partial_jg_{ij}-\partial_ig_{jj}) \nu^i \, d\sigma,
$$
where $\omega_{n-1}$ is the volume of the unit sphere $\mathbb S^{n-1}(1)$, $S_\rho$ is the coordinate $\rho$-sphere $\{x\in \mathbb{R}^n: |x|=\rho\}$, and $\nu$ is the outward unit normal of $S_\rho$ in $\mathbb R^n$.

From general relativity people has great interest in showing lower bounds for ADM mass of AF manifolds with nonnegative scalar curvature.  Schoen-Yau \cite{SY1979,SY1981} (as well as Witten \cite{Witten1981}) proved the famous Riemannian positive mass theorem: the ADM mass of each AF end must be nonnegative if the given AF $3$-manifold $(M,g)$ has nonnegative scalar curvature, where the ADM mass vanishes exactly when $(M,g)$ is isometric to the Euclidean $3$-space $(\mathbb R^3,g_{euc})$. 
As a more quantitive estimate for ADM mass, Bray \cite{Bray2001} verified the famous Riemannian Penrose inequality: if $(M,g)$ is an AF $3$-manifold with nonnegative scalar curvature and $\Sigma$ is the outermost {\it minimal} $2$-spheres with respect to one AF end $\mathcal E$, then we have $m(M,g,\mathcal E)\geq \sqrt{\frac{|\Sigma|_g}{16\pi}}$, where the equality holds if and only if the region outside $\Sigma$ is isometric to half Schwarzschild manifold with mass $\sqrt{\frac{|\Sigma|_g}{16\pi}}$.
(The case when $\Sigma$ is a single $2$-sphere was handled by Huisken-Ilmanen \cite{HI2001} and recently by \cite{AMMO2022,HML2022} as well). For analogy of these results in higher dimensions, the audience can refer to \cite{Schoen1989,Witten1981,SY2020,BL2009}, but here we just focus on the three dimensional case which is enough for our purpose. It is worth pointing out the fact that Huisken-Ilmanen don't require $\Sigma$ to be necessarily minimal, and their more general result can be stated as following

\begin{theorem}[ADM-Hawking mass inequality \cite{HI2001,AMMO2022,HML2022}]\label{Thm: Penrose HI}
Let $(M^3,g)$ be an AF $3$-manifold with non-empty boundary $\partial M$ and nonnegative scalar curvature. Assume
\begin{itemize}
\item $\partial M$ is connected and outer-minimizing (see Definition \ref{Defn: outer-minimizing});
\item $M$ has only one end $\mathcal E$ and satisfies $H_2(M,\partial M,\mathbf Z)=0$.
\end{itemize}
Then we have
$$
m(M,g,\mathcal E)\geq \sqrt{\frac{|\partial M|_g}{16\pi}}\left(1-\frac{1}{16\pi}\int_{\partial M} H^2\mathrm d\sigma_g\right),
$$
where $H$ is the mean curvature of $\partial M$ with respect to inward unit normal and $\mathrm d\sigma_g$ is the area element of $\partial M$ with the induced metric.
\end{theorem}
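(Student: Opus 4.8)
\medskip
\noindent\emph{Proof strategy (following \cite{HI2001}).}
The plan is to run Huisken--Ilmanen's weak inverse mean curvature flow (IMCF) from $\Sigma_0=\partial M$ and exploit the monotonicity of the Hawking mass along it. First I would set up the level--set formulation: seek a proper locally Lipschitz function $u\colon M\to[0,+\infty)$ with $u|_{\partial M}=0$ solving $\Div\!\bigl(\nabla u/|\nabla u|\bigr)=|\nabla u|$ in the weak variational sense of \cite{HI2001}, and let the flow surfaces be $\Sigma_t=\partial\{u<t\}$, passing to outer--minimizing hulls at the (at most countably many) times where jumps occur. Existence, properness of $u$, and the fact that the $\Sigma_t$ exhaust $M$ come from the elliptic--regularization scheme; here the hypothesis that $\partial M$ is connected and outer--minimizing rules out an instantaneous jump off $\partial M$, and the topological hypothesis $H_2(M,\partial M,\mathbf Z)=0$ together with the single--end assumption guarantees that each $\Sigma_t$ stays connected and that the flow genuinely sweeps out the end $\mathcal E$.

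\medskip
\noindent The second step is the Geroch monotonicity of the Hawking mass $m_H(\Sigma)=\sqrt{|\Sigma|_g/16\pi}\,\bigl(1-\tfrac1{16\pi}\int_\Sigma H^2\,\mathrm d\sigma_g\bigr)$. On a smooth portion of the flow one differentiates, uses the evolution equation for $H$, the Gauss equation to trade $\Ric(\nu,\nu)$ for the scalar curvature $R(g)$ and the intrinsic curvature of $\Sigma_t$, an integration by parts, and the Gauss--Bonnet theorem, to obtain
\begin{equation*}
\frac{\mathrm d}{\mathrm dt}\,m_H(\Sigma_t)=\frac{|\Sigma_t|_g^{1/2}}{(16\pi)^{3/2}}\left(4\pi\bigl(2-\chi(\Sigma_t)\bigr)+\int_{\Sigma_t}\Bigl(\frac{2|\nabla_{\Sigma_t}H|^2}{H^2}+\tfrac12(\lambda_1-\lambda_2)^2+R(g)\Bigr)\,\mathrm d\sigma_g\right)\ \ge\ 0 ,
\end{equation*}
the right--hand side being nonnegative because $R(g)\ge 0$ and, $\Sigma_t$ being connected, $\chi(\Sigma_t)\le 2$. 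Across a jump time the area is continuous (the weak flow has exactly exponential area growth, $|\Sigma_t|_g=e^{t}|\partial M|_g$) while $\int H^2$ cannot increase, since the newly exposed part of $\Sigma_t$ is a minimal surface; hence $m_H$ does not drop. Therefore $t\mapsto m_H(\Sigma_t)$ is nondecreasing. Making all of this rigorous for the \emph{weak} flow is the content of the Huisken--Ilmanen theory, via approximation by smooth flows coming from elliptic regularization.

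\medskip
\noindent The third and more delicate step is to identify $\lim_{t\to+\infty}m_H(\Sigma_t)$ with the ADM mass. Once the flow has entered the asymptotically flat chart one compares it with the round IMCF of the exactly Euclidean (or Schwarzschild) model, and uses the decay \eqref{Eq: decay} together with the a priori control on $\int_{\Sigma_t}H^2$ furnished by the monotonicity to show that $\Sigma_t$ becomes asymptotically round in the sense needed for the Hawking mass to converge to the ADM surface integral; this is the Huisken--Ilmanen blow--down argument. Combining the three steps,
\begin{equation*}
m(M,g,\mathcal E)=\lim_{t\to+\infty}m_H(\Sigma_t)\ \ge\ m_H(\Sigma_0)=\sqrt{\frac{|\partial M|_g}{16\pi}}\left(1-\frac1{16\pi}\int_{\partial M}H^2\,\mathrm d\sigma_g\right),
\end{equation*}
which is the claim.

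\medskip
\noindent I expect the genuine obstacles to be the first and third steps: constructing the weak flow with the stated connectedness and exhaustion properties, and establishing the asymptotic convergence of $m_H(\Sigma_t)$ to $m(M,g,\mathcal E)$; by contrast the monotonicity computation of the second step is essentially mechanical once the flow is available. An alternative that circumvents IMCF, in the spirit of \cite{AMMO2022,HML2022}, would be to replace the flow by the level sets of a suitable $p$--harmonic potential (or a Green's--function--type potential) on $M$ prescribed to vanish on $\partial M$ and to have the appropriate decay at infinity, prove the monotonicity of an associated functional along those level sets via a Bochner/Reilly identity using $R(g)\ge 0$ and Gauss--Bonnet exactly as above, and then evaluate the two endpoints; in that approach the technical burden shifts to the fine analysis of the potential near its critical set and in the asymptotic region.
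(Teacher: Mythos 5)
This statement is not reproved in the paper: Theorem~\ref{Thm: Penrose HI} is quoted as a known result from \cite{HI2001,AMMO2022,HML2022}, and the paper relies on it as a black box. So there is no ``paper's own proof'' to compare against; the relevant comparison is with the cited literature, and against that your sketch is a faithful and accurate high-level outline of the Huisken--Ilmanen argument. Your Geroch monotonicity formula is right: using the twice-contracted Gauss equation $2\Ric(\nu,\nu)=R-2K+H^2-|A|^2$, one gets exactly the coefficient $4\pi\bigl(2-\chi(\Sigma_t)\bigr)$ in front, and the nonnegativity relies on $R(g)\ge 0$ together with connectedness of $\Sigma_t$ forcing $\chi(\Sigma_t)\le 2$. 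Your identification of the genuinely delicate parts is also correct: the construction and regularity theory for the weak level-set flow (including the outer-minimizing hypothesis to rule out an initial jump and the topological/single-end hypotheses to keep the flow surfaces connected and exhaustive), and the blow-down argument showing $m_H(\Sigma_t)\to m(M,g,\mathcal E)$, are where essentially all of the work in \cite{HI2001} lives; the pointwise monotonicity computation is classical. The alternative you mention in your last paragraph is precisely what \cite{AMMO2022,HML2022} do (monotone quantities along level sets of $p$-harmonic or related potentials, with the monotonicity proved by a Bochner-type identity plus Gauss--Bonnet), so that remark is also on target. In short: your proposal is consistent with the sources the paper cites, and the only caveat worth stating is that fully justifying the weak-flow existence/regularity and the asymptotic limit would require reproducing a substantial portion of Huisken--Ilmanen's analysis, which is exactly why the paper cites rather than reproves this theorem.
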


If $\partial M$ is minimal, it is called a {\it horizon} of the AF manifold $(M,g)$. Note that large coordinate spheres in each end of an AF manifold $(M,g)$ are mean-convex with respect to the outward unit normal, and so we can always find a horizon through minimizing the area functional. As a consequence, the existence of a horizon required in Riemannian Penrose inequality always holds. However, this may not be the case when we consider the following more general class of AF manifolds.
\begin{definition}[Lesourd-Unger-Yau \cite{LUY2021}]
A complete Riemannian $n$-manifold $(M,g)$ with a distinguished end $\mathcal E$ is called {\it AF with arbitrary ends} if $n\geq 3$ and $\mathcal E$ is diffeomorphic to $\mathbb R^n-B_1$, where the metric $g$ satisfies the asymptotics \eqref{Eq: decay}, and $R(g)$ belongs to $L^1(\mathcal E,g)$.
\end{definition}
\begin{remark}
Compared to AF manifolds mentioned at the beginning of this section, AF manifolds with arbitrary ends defined above allow non-AF ends other than the distinguished AF end $\mathcal E$.
\end{remark}

The research on AF manifolds with arbitrary ends was motivated by the Liouville theorem from the conformal geometry. Schoen-Yau \cite{SY1988,SY1994} reduced the Liouville theorem to some version of Riemannian positive mass theorem for asymptotically Schwarzschild manifolds with arbitrary ends, which was recently confirmed by Lesourd-Unger-Yau \cite{LUY2021} (see \cite{LLU2022,Zhu2022} for general AF cases).

For AF manifolds with arbitrary ends the existence of a horizon no longer holds automatically. In Appendix \ref{Sec: trumpet-like} we construct a trumpet-like metric $g$ on $\mathbb R^n\setminus\{O\}$ such that $(\mathbb R^n\setminus\{O\},g)$ appears to be an AF manifold with arbitrary ends and nonnegative scalar curvature, which
has a global mean-convex foliation consisting of coordinate spheres ruling out the possibility for any minimal surface. Given this fact it seems a very interesting problem to search for appropriate geometric quantities still providing a lower bound for mass. 
This paper is devoted to finding a possible answer to this problem, where our philosophy here is to view the Riemannian Penrose inequality as some kind of systolic inequality.

Our main theorem in dimension three is stated as follows.

\begin{theorem}\label{Thm: main}
Let $O$ be the origin of $\mathbb R^3$ and $g$ be a complete metric on $\mathbb R^3\setminus\{O\}$ with nonnegative scalar curvature, which is asymptotically flat around the infinity of $\mathbb R^3$. Then the ADM mass $m$ at the infinity of $\mathbb R^3$ satisfies
\begin{equation}\label{Eq: Penrose}
m\geq \sqrt{\frac{A_g}{16\pi}},
\end{equation}
where $A_g$ is given by
\begin{equation*}
A_g=\inf\left\{\mathcal H^2_g(\Sigma)\left|
\begin{array}{c}\text{$\Sigma$ is a smoothly embedded surface }\\
\text{homologous to $\mathbb S^2(1)$ in $\mathbb R^3\setminus\{O\}$.}
\end{array}\right.\right\}
\end{equation*}
Moreover, the equality holds if and only if there is a strictly outer-minimizing minimal $2$-sphere $\Sigma_h$ homologous to $\mathbb S^2(1)$ such that the region outside $\Sigma_h$ is isometric to the half Schwarzschild manifold with mass $\sqrt{\frac{A_g}{16\pi}}>0$.
\end{theorem}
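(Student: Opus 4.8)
\emph{Strategy.} The plan is to reduce \eqref{Eq: Penrose} to the ADM--Hawking mass inequality of Theorem \ref{Thm: Penrose HI}. If $A_g=0$ the estimate is nothing but the positive mass theorem for asymptotically flat manifolds with arbitrary ends (Lesourd--Unger--Yau), applied to $(\mathbb R^3\setminus\{O\},g)$ with distinguished end the infinity of $\mathbb R^3$; so from now on we assume $A_g>0$. To feed Theorem \ref{Thm: Penrose HI} we would need a connected outer-minimizing $2$-sphere separating the puncture from infinity with small total $H^2$, and since the examples of Appendix \ref{Sec: trumpet-like} show that no minimal such surface need exist, we produce near-minimizers by the $\mu$-bubble method.

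\emph{Step 1: the competitor surfaces.} Fix $\delta>0$. First choose $R_\delta$ so large that the coordinate sphere $S_{R_\delta}$ is strictly mean-convex towards the infinity of $\mathbb R^3$ and that $\inf\{\mathcal H^2_g(\Sigma):\Sigma$ admissible and contained in $B_{R_\delta}\}<A_g+\tfrac\delta2$; fix such an admissible $\Sigma^\delta_\ast$, bounding a region $\Omega^\delta_\ast\subset B_{R_\delta}$ of the puncture, and then a neighbourhood $\Omega^\delta_0$ of the puncture end with $\overline{\Omega^\delta_0}\subset\Omega^\delta_\ast$ (this uses completeness of $g$ along the puncture end, so that $\Sigma^\delta_\ast$ lies at finite distance from the end). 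On the band $N_\delta$ between $\partial\Omega^\delta_0$ and $S_{R_\delta}$, consider the $\mu$-bubble functional
$$
\Omega\ \longmapsto\ \mathcal H^2_g(\partial^\ast\Omega)-\int_{N_\delta}\bigl(\chi_\Omega-\chi_{\Omega^\delta_\ast}\bigr)h_\delta\,\mathrm d\mu_g
$$
over Caccioppoli sets $\Omega$ with $\Omega\triangle\Omega^\delta_\ast\Subset N_\delta$, where $h_\delta$ is smooth, equals a constant $\epsilon_\delta\in(0,2R_\delta^{-1})$ on the bulk of $N_\delta$, and tends to $+\infty$, resp. $-\infty$, on thin collars of $\partial\Omega^\delta_0$, resp. $S_{R_\delta}$. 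By Gromov's existence and interior regularity theory for $\mu$-bubbles in dimension three there is a smooth embedded minimizer $\Sigma_\delta=\partial^\ast\Omega_\delta$ which avoids the collars, so $H_{\Sigma_\delta}\equiv\epsilon_\delta$ with respect to the normal pointing towards infinity; since $\epsilon_\delta>0$ the minimizer has no superfluous components, so $\Sigma_\delta$ is connected and homologous to $\mathbb S^2(1)$. The second-variation inequality with test function $1$ (using $\partial_\nu h_\delta=0$ along $\Sigma_\delta$), the Gauss equation and Gauss--Bonnet give
$$
8\pi\bigl(1-\mathrm{genus}(\Sigma_\delta)\bigr)\ \ge\ \int_{\Sigma_\delta}\bigl(R(g)+|A|^2+\epsilon_\delta^2\bigr)\,\mathrm d\sigma_g\ >\ 0,
$$
so $\Sigma_\delta$ is a $2$-sphere. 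Because $\epsilon_\delta>0$ and $\Omega^\delta_0\subset\Omega_\delta$, strictly enlarging $\Omega_\delta$ strictly increases the functional, so $\Sigma_\delta$ is outer-minimizing. Finally, comparing $\Omega_\delta$ with $\Omega^\delta_\ast$ in the functional and choosing $\epsilon_\delta$ small enough \emph{after} $R_\delta,\Omega^\delta_\ast,\Omega^\delta_0$ are fixed yields $A_g\le\mathcal H^2_g(\Sigma_\delta)\le A_g+\delta$, whence $\int_{\Sigma_\delta}H^2\,\mathrm d\sigma_g=\epsilon_\delta^2\mathcal H^2_g(\Sigma_\delta)\to0$ as $\delta\to0$.

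\emph{Step 2: conclusion of the inequality.} The exterior $M_\delta:=(\mathbb R^3\setminus\{O\})\setminus\Omega_\delta$ is an asymptotically flat $3$-manifold with connected outer-minimizing boundary $\Sigma_\delta$, a single end, nonnegative scalar curvature, and $H_2(M_\delta,\partial M_\delta,\mathbf Z)=0$ since it deformation retracts onto the sphere $\Sigma_\delta$. Theorem \ref{Thm: Penrose HI} then gives
$$
m\ \ge\ \sqrt{\frac{\mathcal H^2_g(\Sigma_\delta)}{16\pi}}\Bigl(1-\frac{1}{16\pi}\int_{\Sigma_\delta}H^2\,\mathrm d\sigma_g\Bigr)\ \ge\ \sqrt{\frac{A_g}{16\pi}}\Bigl(1-\frac{\epsilon_\delta^2(A_g+\delta)}{16\pi}\Bigr),
$$
and letting $\delta\to0$ proves \eqref{Eq: Penrose}. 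The genuine obstacle is Step~1: one must keep the $\mu$-bubble away from the deep part of the puncture end \emph{and} keep its area within $\delta$ of $A_g$; the resolution rests on the fact that although near-optimal surfaces for $A_g$ may run arbitrarily far down the puncture end, their areas still converge to $A_g$, so it suffices to fix $\Omega^\delta_0$ deep enough before shrinking $\epsilon_\delta$.

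\emph{Step 3: rigidity.} Suppose $m=\sqrt{A_g/16\pi}$; then $A_g>0$, since otherwise $m=0$ and the rigidity of the positive mass theorem would force $(\mathbb R^3\setminus\{O\},g)$ to be isometric to Euclidean space, impossible for topological reasons. Letting $\delta\to0$ in Step~1 there are two alternatives. If the $\Sigma_\delta$ remain in a fixed compact set, then by Schoen's curvature estimate for stable surfaces they subconverge, smoothly and with multiplicity one, to an embedded outer-minimizing minimal $2$-sphere $\Sigma_h$ with $\mathcal H^2_g(\Sigma_h)=A_g$. Otherwise the $\Sigma_\delta$ escape down the puncture end, and then the quantitative rigidity behind the Huisken--Ilmanen monotonicity formula together with completeness of $g$ shows that $M$ contains an isometric copy of the closed exterior $\{r\ge 2m\}$ of the Schwarzschild manifold of mass $m$, whose horizon serves as $\Sigma_h$. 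In either case, applying the equality case of Theorem \ref{Thm: Penrose HI} to $M\setminus\Omega_h$ shows that the region outside $\Sigma_h$ is isometric to the half Schwarzschild manifold of mass $\sqrt{A_g/16\pi}>0$, and the strict mean-convexity of the Schwarzschild coordinate spheres shows $\Sigma_h$ is strictly outer-minimizing. The converse implication is immediate, reading the ADM mass off the Schwarzschild end. The delicate point here will be the escape alternative, which requires the stability of the Huisken--Ilmanen inequality under a small Hawking-mass increase.
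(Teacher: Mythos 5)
Your inequality part (Steps 1--2) is essentially correct and follows the same outline as the paper: build a $\mu$-bubble near-minimizer, show it is a sphere via the second variation and Gauss--Bonnet, check it is outer-minimizing, then feed the exterior into Theorem \ref{Thm: Penrose HI} and let the prescribed mean curvature tend to zero. The differences with the paper are cosmetic: the paper prescribes $h_{\epsilon,\beta}(t)=\epsilon\coth(\frac34\epsilon t+\beta)$ and uses the outer coordinate sphere $S_{r_0}$ together with its mean convexity as a hard constraint, obtaining near-constant (not constant) mean curvature at each stage; you prescribe a constant on the bulk with explicit $\pm\infty$ collars. The paper also does not bother to make $\mathcal H^2_g(\Sigma_i)$ within $\delta$ of $A_g$; it only uses the crude bound $A_g\leq\mathcal H^2_g(\Sigma_i)\leq\mathcal H^2_g(S_{r_0})$, which suffices since the $H^2$ term goes to zero. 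Your sharper area control at this stage is correct but superfluous for \eqref{Eq: Penrose}.

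The rigidity part (Step 3) has a genuine gap at exactly the point you flag. The dichotomy ``either the $\Sigma_\delta$ stay in a compact set, or they escape down the puncture'' is the right framing, but you dispose of the escape alternative by invoking ``the quantitative rigidity behind the Huisken--Ilmanen monotonicity formula together with completeness of $g$,'' which is not a theorem you have and is not a soft consequence of anything cited in the paper. Such a stability result for the inverse mean curvature flow under a small mass deficit would itself be a substantial contribution and cannot be assumed. The paper's way around this is to \emph{prevent escape outright} rather than analyze it: under the equality hypothesis, the ADM--Hawking inequality applied to a strictly outer-minimizing CMC sphere $\Sigma_\epsilon$ of mean curvature $\epsilon$ forces the quantitative area pinching $\mathcal H^2_g(\Sigma_\epsilon)\leq A_g+\Lambda_0\epsilon^2$ (Lemma \ref{Lem: area estimate}), and then a second $\mu$-bubble comparison gives the volume control $\mathcal H^3_g(\Omega_{\tilde\epsilon,\epsilon})\leq \Lambda_0\epsilon^2/\tilde\epsilon$ for the region between $\Sigma_{\tilde\epsilon}$ and $\Sigma_\epsilon$ (Lemma \ref{Lem: volume estimate}). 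Iterating with mean curvatures $\epsilon^{\gamma^k}$ for fixed $\gamma\in(1,2)$ makes the cumulative volume a convergent series; combined with the coarea formula and the area lower bound $\mathcal H^2_g\geq A_g>0$ on every level set, this pins all the $\Sigma_{\epsilon^{\gamma^k}}$ to a fixed compact set. Only then does one pass to a limit (via the stable CMC curvature estimates of Zhou--Zhu) and invoke the Gromov--Lawson finiteness result to conclude the limit is a closed minimal sphere. This volume estimate with the $\epsilon^{\gamma^k}$ scheduling is the key new idea of the rigidity proof and is absent from your argument; without it, the escape case is unresolved. Also note that your compact alternative tacitly assumes the $\Sigma_\delta$ form a comparable family; the paper arranges this by making the sequence nested from the start, each $\mu$-bubble being solved inside the previous one.
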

The underlying manifold in our theorem limited to $\mathbb R^3\setminus\{O\}$ is due to its convenience for us to verify topological conditions of Theorem \ref{Thm: Penrose HI} in
our frequent use of ADM-Hawking mass inequality. Other reasons for working in dimension three is that we have to bound diameter for certain surface based on Gromov's band width estimate \cite{Gromov2018} to control its mean curvature, and that we also need to use an infinite volume estimate for non-compact stable minimal surfaces from \cite{GL1983}, which are only valid for surfaces in $3$-manifolds.

Despite of these technical limitations in this work, we still hope a more general version of Theorem \ref{Thm: main} to be true. Given Bray-Lee's work \cite{BL2009} it is natural to expect similar results to hold in dimensions greater than three. Combined with the systolic nature illustrated from Theorem \ref{Thm: main} we raise the following more general

\begin{conjecture}[Riemannian Penrose inequality without horizon]
Let $(M^n,g,\mathcal E)$ be an asymptotically flat manifold with arbitrary ends and nonnegative scalar curvature. Then we have
$$
m(M,g,\mathcal E)\geq \frac{1}{2}\left(\frac{A_g}{\omega_{n-1}}\right)^{\frac{n-2}{n-1}},
$$
where $\omega_{n-1}$ is the volume of the unit sphere $\mathbb S^{n-1}(1)$ and
\begin{equation*}
A_g=\inf\left\{\mathcal H^{n-1}_g(\Sigma)\left|
\begin{array}{c}\text{$\Sigma$ is a smoothly embedded hypersurface }\\
\text{homologous to $\partial \mathcal E$ in $M$.}
\end{array}\right.\right\}.
\end{equation*}
Moreover, the equality holds if and only if there is a strictly outer-minimizing minimal $(n-1)$-sphere $\Sigma_h$ homologous to $\partial \mathcal E$ such that the region outside $\Sigma_h$ is isometric to the half Schwarzschild manifold with mass $m>0$.
\end{conjecture}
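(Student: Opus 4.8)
The plan is to derive \eqref{Eq: Penrose} from the ADM--Hawking mass inequality of Theorem~\ref{Thm: Penrose HI}, applied to the region lying exterior (toward the distinguished end) to a carefully chosen embedded surface $\Sigma$ homologous to $\mathbb S^2(1)$. To make this go through for an arbitrary $g$ as in the statement, one must produce a $\Sigma$ that is (i) connected, (ii) outer-minimizing in the sense of Definition~\ref{Defn: outer-minimizing}, (iii) bounds an exterior region with a single end and $H_2(\,\cdot\,,\Sigma,\mathbf Z)=0$, and (iv) satisfies $\int_\Sigma H^2\,\mathrm d\sigma_g$ as small as we wish; then Theorem~\ref{Thm: Penrose HI} gives $m\ge\sqrt{|\Sigma|_g/16\pi}\,(1-o(1))\ge\sqrt{A_g/16\pi}\,(1-o(1))$, the second inequality because $\Sigma$ competes in the definition of $A_g$. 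We may assume $A_g>0$: if $A_g=0$, then \eqref{Eq: Penrose} reads $m\ge0$, which is the positive mass theorem for asymptotically flat manifolds with arbitrary ends \cite{LUY2021,LLU2022,Zhu2022}, and its rigidity excludes $m=0$ since $(\mathbb R^3\setminus\{O\},g)$ is complete but not isometric to $(\mathbb R^3,g_{euc})$. Throughout we use that large coordinate spheres $S_\rho$ in the distinguished end are strictly mean-convex toward infinity, hence serve as outer barriers, and that the topology of $\mathbb R^3\setminus\{O\}$ (every embedded $2$-sphere bounds a ball, and $H_2(\mathbb R^3\setminus\{O\},\mathbf Z)=\mathbf Z$) lets us pass from any separating surface to a connected one realizing the class of $\mathbb S^2(1)$ and verify the topological hypotheses in (iii).

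We split into two cases according to whether area-minimizing sequences for $A_g$ stay in a fixed compact set or drift off into the end near $O$. In the first case a standard compactness-and-regularity argument produces a smooth embedded minimal surface homologous to $\mathbb S^2(1)$; taking the outermost such surface $\Sigma_h$ relative to the distinguished end (the boundary of the union of trapped regions), it is strictly outer-minimizing, and after discarding components not separating $O$ from infinity it is a connected $2$-sphere whose exterior meets the hypotheses of Theorem~\ref{Thm: Penrose HI}. Applying that theorem with $H\equiv0$ yields $m\ge\sqrt{|\Sigma_h|_g/16\pi}\ge\sqrt{A_g/16\pi}$, and this is the case in which equality can occur.

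The substantive case is when every minimizing sequence escapes into the $O$-end, so that no minimal surface in the class of $\mathbb S^2(1)$ exists and the $O$-end is ``trumpet-like''. Here we run the $\mu$-bubble construction: fixing a proper depth function $t$ on the $O$-end, for a slowly varying prescribing function $h=h_k$ supported in $\{t\ge T_k\}$ with $T_k\to\infty$, $\sup|h_k|\to0$, and $\|h_k\|_{L^1}<\infty$, we minimize $\mathcal A(\Omega)=\mathcal H^2_g(\partial^*\Omega)-\int_\Omega h_k\,\mathrm dv_g$ among Caccioppoli sets trapped between $S_\rho$ and $\{t\ge T_k\}$; the prescribing function is chosen so that the first variation forces the minimizer $\Sigma_k$ into $\{t>T_k\}$ with $H_{\Sigma_k}=h_k$ (hence $\Sigma_k$ homologous to $\mathbb S^2(1)$), and the second variation together with $R(g)\ge0$ and Gauss--Bonnet gives
$$
2\pi\chi(\Sigma_k)\ \ge\ \int_{\Sigma_k}\Big(\tfrac12|A|^2+\tfrac12 H^2+\partial_\nu h_k\Big)\,\mathrm d\sigma_g .
$$
To extract usable information one localizes to the band in the $O$-end where $h_k\ne0$: the $\mu$-bubble equation endows this band with effectively positive scalar curvature, so Gromov's band-width estimate \cite{Gromov2018} bounds $\operatorname{diam}_g\Sigma_k$ from above; combined with the displayed inequality (and a rigidity argument for stable minimal tori to exclude the borderline genus-one case) this forces $\chi(\Sigma_k)=2$, so $\Sigma_k$ is a $2$-sphere, and it gives $\int_{\Sigma_k}H^2\,\mathrm d\sigma_g\le C\,(\sup|h_k|)^2(\operatorname{diam}_g\Sigma_k)^2\to0$. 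One also has to rule out that the $\Sigma_k$'s degenerate, as $k\to\infty$, into a complete noncompact stable minimal surface running down the $O$-end; such a surface would carry infinite area by the volume growth estimate for noncompact stable minimal surfaces in $3$-manifolds of nonnegative scalar curvature \cite{GL1983}, contradicting the uniform bound $|\Sigma_k|_g\le|\mathbb S^2(1)|_g+\|h_k\|_{L^1}$. With $\Sigma_k$ now a connected outer-minimizing $2$-sphere homologous to $\mathbb S^2(1)$ whose exterior satisfies the hypotheses of Theorem~\ref{Thm: Penrose HI}, the ADM--Hawking inequality and $k\to\infty$ give \eqref{Eq: Penrose}.

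For the rigidity statement, if $m=\sqrt{A_g/16\pi}$ one first shows the second (trumpet) alternative is impossible, because the drift of the $\mu$-bubbles forces the inequality $m\ge\sqrt{A_g/16\pi}$ to be strict in the limit; so we are in the first alternative with equality throughout, whence $|\Sigma_h|_g=A_g$, equality holds in Theorem~\ref{Thm: Penrose HI}, and its rigidity case \cite{HI2001,AMMO2022,HML2022} identifies the exterior of $\Sigma_h$ with the half Schwarzschild manifold of mass $\sqrt{A_g/16\pi}$, which is positive since $A_g>0$. Conversely, given such a configuration, the strict outer-minimizing property of $\Sigma_h$ together with a cut-and-paste comparison across $\Sigma_h$ shows that no surface homologous to $\mathbb S^2(1)$ has area below $|\Sigma_h|_g$, so $A_g=|\Sigma_h|_g=16\pi m^2$ and equality holds in \eqref{Eq: Penrose}. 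I expect the main obstacle to be precisely the trumpet case: balancing the prescribing function $h_k$ so that the $\mu$-bubble is simultaneously pushed deep into the $O$-end, kept at bounded diameter via the band-width estimate, forced to be spherical, and endowed with vanishingly small $\int_{\Sigma_k}H^2\,\mathrm d\sigma_g$, all while preventing the $\mu$-bubbles from collapsing onto a noncompact stable minimal surface.
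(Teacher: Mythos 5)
The statement you were asked to address is the Conjecture, which is stated for general dimension $n\geq 3$ and for a general asymptotically flat manifold $(M^n,g,\mathcal E)$ with arbitrary ends. This is an \emph{open problem} in the paper: the author explicitly raises it as a conjecture beyond what is proved, and explains the obstructions to generalizing the argument beyond dimension three. Your proposal never engages with the general statement; it works entirely on $\mathbb R^3\setminus\{O\}$, invokes $\mathbb S^2(1)$, $16\pi$, Gauss--Bonnet, and Theorem~\ref{Thm: Penrose HI}, and therefore is an attempt at Theorem~\ref{Thm: main}, not at the Conjecture. Nothing in your argument addresses (i) higher dimensions, where Theorem~\ref{Thm: Penrose HI} and the Gauss--Bonnet step are unavailable, (ii) the verification of the topological hypotheses $H_2(\,\cdot\,,\partial\,\cdot\,;\mathbf Z)=0$ and the passage from a separating hypersurface to a connected one homologous to $\partial\mathcal E$ when $M$ is not $\mathbb R^n\setminus\{O\}$, or (iii) the absence of a higher-dimensional substitute for Gromov's band-width estimate and for the Gromov--Lawson infinite-area estimate of \cite{GL1983}, both of which the author singles out as reasons the proof is restricted to $n=3$. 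A valid proof of the Conjecture would need genuinely new input on all three points.

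Even read as an argument for Theorem~\ref{Thm: main}, the treatment of rigidity is a serious gap. You assert that in the ``trumpet'' alternative the drift of the $\mu$-bubbles ``forces the inequality to be strict in the limit,'' but you give no mechanism for this; drift per se does not quantify the error in the ADM--Hawking inequality. The paper's rigidity argument is very different and is the heart of Section~\ref{Sec: proof of rigidity}: equality in \eqref{Eq: Penrose} is fed into Theorem~\ref{Thm: Penrose HI} to get the area upper bound $\mathcal H^2_g(\Sigma_\epsilon)\leq A_g+\Lambda_0\epsilon^2$ (Lemma~\ref{Lem: area estimate}), which, combined with the minimizing property of the $\mu$-bubble, yields a \emph{volume} bound $\mathcal H^3_g(\Omega_{\tilde\epsilon,\epsilon})\leq\Lambda_0\epsilon^2/\tilde\epsilon$ (Lemma~\ref{Lem: volume estimate}); a geometric sequence $\epsilon^{\gamma^k}$ with $\gamma\in(1,2)$ is then chosen so that the total swept volume is finite, and together with the lower bound $\mathcal H^2_g(\Sigma)\geq A_g>0$ on the level sets of the distance function this pins the $\mu$-bubbles inside a fixed compact set. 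That is the precise sense in which equality ``forces'' the trumpet alternative out, and it is exactly the step your sketch waves at. Secondary issues: the paper's uniform area bound is $\mathcal H^2_g(\Sigma_i)\leq\mathcal H^2_g(S_{r_0})$ from the outer-minimizing comparison with a coordinate sphere (your $|\Sigma_k|_g\le|\mathbb S^2(1)|_g+\|h_k\|_{L^1}$ is not what is used and is not obviously correct), and the paper does not need a ``stable minimal torus rigidity'' step to get sphericality, since the strict inequality $2h_{\epsilon,\beta}'+\tfrac32 h_{\epsilon,\beta}^2=\tfrac32\epsilon^2>0$ already gives $\chi(\Sigma_{\epsilon,\beta,O})>0$.
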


In the rest of this section, let us say some words on our proof for Theorem \ref{Thm: main}. 
First we remark that when constant $A_g$ vanishes, the inequality \eqref{Eq: Penrose} is an immediate consequence of Riemannian positive mass theorem established in \cite{LLU2022,Zhu2022}.
If the constant $A_g$ does not vanish, inspired by Theorem \ref{Thm: Penrose HI} our idea is to find outer-minimizing surface with small mean curvatures such that we can apply Theorem \ref{Thm: Penrose HI} to the region outside and then obtain the desired mass lower bound up to arbitrarily small errors. Hopefully, such surface can be constructed by setting appropriate $\mu$-bubble problems (see Section \ref{Sec: proof of inequality} for details) and this is enough for a proof of mass inequality \eqref{Eq: Penrose}.

Due to the existence of small errors in the previous discussion it does not provide a proof for rigidity and we have to directly find the horizon $\Sigma_h$ rather than approximating surfaces with small mean curvatures. The appearance of minimal hypersurface in the critical case of rigidity theorems or geometric inequalities was discussed in the previous work \cite{Zhu2020} of the author, where an approximation scheme of $\mu$-bubbles was introduced. Here we just take the same idea to construct a sequence of $\mu$-bubbles $\Sigma_i$ with mean curvature $H_i\to 0$ as $i\to\infty$ and take the limit surface $\Sigma_\infty$. Along this idea the key thing is to guarantee that $\Sigma_i$ always intersects a fixed compact subset $K$. In his work \cite{Zhu2020} the author developing a fixing trick based on the topological obstruction of $\Sigma_i$ for positive scalar curvature. This trick cannot work here since $\Sigma_i$ are topological $2$-spheres, and we have to obtain the fixing phenemenon in a more geometric way.
The crucial observation is that when equality holds in the Riemannian Penrose inequality \eqref{Eq: Penrose} those appropriately constructed surfaces with smaller and smaller mean curvature cannot jump over a region with a definite amount of volume (see Lemma \ref{Lem: volume estimate} for details). Combined with the positivity of $A_g$ we are able to show that a sequence of carefully constructed $\mu$-bubbles always intersect a fixed compact subset and consequencely they converge to our desired horizon $\Sigma_h$. We include all these technical details of our proof of rigidity in Section \ref{Sec: proof of rigidity}.

\section*{Acknowledgement}
This work is supported by China Postdoctoral Science Foundation (grant no. BX2021013). The author would like to express his gratitude to Professor Gang Tian and Professor Yuguang Shi for their constant encouragements. The author also thank Dr. Yuchen Bi for many helpful discussions during his one-week stay at University of Science and Technology of China.
\section{Proof of \eqref{Eq: Penrose}}\label{Sec: proof of inequality}
In this section, we present a proof for the desired inequality \eqref{Eq: Penrose}. First we recall the following
\begin{definition}\label{Defn: outer-minimizing}
Let $(M^n,g)$ be a Riemannian manifold with non-empty boundary $\partial M$. We say that $\partial M$ is outer-minimizing if for any hypersurface $\Sigma$ enclosing a region $\Omega$ with $\partial M$ it holds
$$
\mathcal H^{n-1}_g(\Sigma)\geq \mathcal H^{n-1}_g(\partial M).
$$
If the equality holds exactly when $\Sigma=\partial M$, we say that $\partial M$ is strictly outer-minimizing.

In our discussion below, a hypersurface $\Sigma$ in $(\mathbb R^n\setminus\{O\},g)$ is said to be (strictly) outer-minimizing when $\Sigma$ is (strictly) outer-minimizing as the boundary of the region outside $\Sigma$.
\end{definition}

As mentioned before the strategy to prove \eqref{Eq: Penrose} is to construct a sequence of connected strictly outer-minimizing surfaces $\Sigma_i$  such that mean curvatures $H_i$ of $\Sigma_i$ approach zero as $i\to\infty$. Indeed we have the following
\begin{proposition}\label{Prop: sequence}
If the constant $A_g$ in Theorem \ref{Thm: main} is positive, then there is a sequence of strictly outer-minimizing spheres $\Sigma_i$ such that $H_i\to 0$ as $i\to\infty$. Moreover, their areas $\mathcal H^2_g(\Sigma_i)$ are uniformly bounded and no less than $A_g$.
\end{proposition}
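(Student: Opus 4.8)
The plan is to produce the spheres $\Sigma_i$ as solutions of a sequence of $\mu$-bubble problems, one for each $i$, with the prescribed-mean-curvature data tuned so that the minimizers are forced to have small mean curvature. Fix a large AF coordinate end, and for a parameter $\varepsilon>0$ choose a smooth function $h=h_\varepsilon$ on the region between a large coordinate sphere $S_{\rho_1}$ and an even larger one $S_{\rho_2}$ which interpolates from a large positive value near $S_{\rho_1}$ to a large negative value near $S_{\rho_2}$, with $|\nabla h|$ small compared to $h^2$ (a ``logarithmic cutoff'' type profile). The key analytic input is the first-variation/second-variation calculus for the functional
\[
\mathcal A(\Omega)=\mathcal H^2_g(\partial^*\Omega)-\int_\Omega h\,\mathrm d\mathcal H^3_g,
\]
over Caccioppoli sets $\Omega$ sandwiched between the two barriers; since $h\to\pm\infty$ with the correct signs near the two coordinate spheres, these act as barriers and a minimizer $\Omega_i$ exists with $\Sigma_i=\partial\Omega_i$ smooth, and its mean curvature satisfies $H_i=h|_{\Sigma_i}$ pointwise. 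The homological constraint must be imposed: we minimize only among $\Omega$ whose boundary is homologous to $\mathbb S^2(1)$ in $\mathbb R^3\setminus\{O\}$; because we work on $\mathbb R^3\setminus\{O\}$ this homology class is detected by whether $\Omega$ separates $O$ from the distinguished infinity, which is a closed condition preserved under $L^1$-convergence, so the direct method applies.

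Next I would extract the three stated conclusions. \emph{Smallness of $H_i$}: one cannot simply say $H_i=h$ is small, since $h$ is large near the barriers; instead the standard trick is to rule out $\Sigma_i$ touching the regions where $|h|$ is large. Here the positivity of $A_g$ enters: if $\Sigma_i$ dipped into the barrier region near $S_{\rho_2}$, comparison with a competitor lying entirely in $\{h\approx 0\}$ (for instance a fixed surface homologous to $\mathbb S^2(1)$ realizing area close to $A_g$) would contradict minimality once $\varepsilon$ is small, because the $\int_\Omega h$ term would be badly negative for the competitor — so one arranges the profile $h$ so that minimizers are pushed into the central region where $|h|\le\varepsilon$, giving $\sup_{\Sigma_i}|H_i|\le\varepsilon=:\varepsilon_i\to0$. \emph{Strict outer-minimization}: since $\Sigma_i$ minimizes $\mathcal A$ among all competitors enclosing it together with infinity, and since in the central region $h$ can be taken to have a favourable sign (or at least the $\int_\Omega h$ contribution of any \emph{outer} competitor $\Sigma'$ enclosing $\Sigma_i$ is nonpositive), we get $\mathcal H^2_g(\Sigma')\ge\mathcal H^2_g(\Sigma_i)-\int_{\Omega'\setminus\Omega_i}h\ge\mathcal H^2_g(\Sigma_i)$, with equality forcing $\Sigma'=\Sigma_i$ by the strong maximum principle applied to the PMC equation; a small perturbation of the profile gives strictness. \emph{Area bounds}: the lower bound $\mathcal H^2_g(\Sigma_i)\ge A_g$ is immediate from the definition of $A_g$ since each $\Sigma_i$ is admissible in that infimum; the uniform upper bound follows by plugging a fixed admissible competitor $\Sigma_\ast$ (independent of $i$) into $\mathcal A$ and using $\mathcal A(\Omega_i)\le\mathcal A(\Omega_\ast)$ together with the uniform bound on $\int|h_\varepsilon|$ over the fixed sandwiched region.

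The remaining point is that each $\Sigma_i$ is a single \emph{sphere}. By the regularity theory for $\mu$-bubbles in dimension three, $\Sigma_i$ is a smooth embedded surface; it is separating by construction, and one discards any components not homologous to $\mathbb S^2(1)$ (a closed minimal-type surface component with the wrong class can be removed, decreasing $\mathcal A$, contradicting minimality, or is absorbed by a connectedness argument). To see the surviving surface is connected and of genus zero, one runs the usual stability argument: $\Sigma_i$ is a stable critical point of $\mathcal A$, so the stabilized second-variation inequality combined with $R(g)\ge0$, the Gauss equation, and Gauss–Bonnet forces $\int_{\Sigma_i}K_{\Sigma_i}>0$ on each component, hence each component is a sphere; a multi-component configuration can then be improved (the component not enclosing the relevant region is removed), leaving one sphere $\Sigma_i$ homologous to $\mathbb S^2(1)$. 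I expect the main obstacle to be the first item — ensuring the minimizers genuinely avoid the barrier regions so that $H_i\to0$ — because this is exactly where one must use the positivity of $A_g$ and carefully design the profile $h_\varepsilon$; the separating-versus-homologous bookkeeping on $\mathbb R^3\setminus\{O\}$ and the genus argument are routine by comparison.
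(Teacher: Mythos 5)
Your proposal gets the general framework (minimize a prescribed-mean-curvature functional, extract a stable minimizer, read off spherical topology from stability and Gauss--Bonnet) but the core construction is set up in the wrong place and, as a result, the mechanism that actually forces $H_i\to 0$ is missing. You place \emph{both} barriers at large coordinate spheres $S_{\rho_1}, S_{\rho_2}$ in the AF end, so the competitors (hence any minimizer $\Sigma_i$) are confined to the annulus between them. But any separating surface in that annulus has area comparable to $4\pi\rho_1^2$, nowhere near $A_g$; the surfaces realizing the infimum $A_g$ may have to plunge arbitrarily close to $O$. In the paper the $\mu$-bubble problem is posed in $\bar B_{r_0}$ with the outer barrier at $S_{r_0}$ (the first mean-convex coordinate sphere) and an \emph{inner} barrier that recedes to $O$ as $\beta\to\infty$: the prescribed function is $h_{\epsilon,\beta}(t)=\epsilon\coth(\tfrac34\epsilon t+\beta)$, always $>\epsilon>0$ and $\to+\infty$ only on the sublevel set $\{\rho\le -\tfrac{4\beta}{3\epsilon}\}$. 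This is not a sign-changing profile; positivity of $h_{\epsilon,\beta}$ is what makes the strict outer-minimizing comparison go through, and letting $\beta\to\infty$ makes $h_{\epsilon,\beta}\to\epsilon$ pointwise.

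The step you flag as ``the main obstacle'' — keeping the minimizer away from where $|h|$ is large so that $H_i$ is small — is indeed where the real work happens, and your sketch of it is not an argument. You cannot compare with a competitor of area near $A_g$ ``lying in $\{h\approx 0\}$'' because such a competitor generally does not lie in your annulus, and the sign of the volume term goes the wrong way in your reasoning ($\mathcal A(\Omega)=\mathcal H^2(\partial\Omega)-\int_\Omega h$, so a region where $h$ is very negative makes $\mathcal A$ \emph{larger}, not smaller). What the paper actually does, for fixed $\epsilon$, is prove that the component $\Sigma_{\epsilon,\beta,O}$ containing $O$ stays inside a \emph{$\beta$-independent} compact set $K_\epsilon$, then sends $\beta\to\infty$ so that $H=h_{\epsilon,\beta}\circ\rho\to\epsilon$ uniformly on $K_\epsilon$. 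The compactness has two ingredients: (i) a diameter bound $\mathrm{diam}(\Sigma_{\epsilon,\beta,O})\le 4\pi/(3\epsilon)$ from the stability inequality and Gromov's band-width estimate applied to the warped product $\Sigma\times_u\mathbb S^1$, using $R(g)\ge 0$ and the ODE $2h'+\tfrac32 h^2=\tfrac32\epsilon^2$; and (ii) a coarea comparison $\mathcal A_{\epsilon,\beta}(\Omega_{\epsilon,\beta})\le\mathcal A_{\epsilon,\beta}(B_{r_0})$, where the lower bound $\mathcal H^2_g(\{\rho=s\}\setminus\Omega_{\epsilon,\beta})\ge A_g$ is what makes the integral blow up if the surface recedes too far toward $O$. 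This is where $A_g>0$ is genuinely used; your proposal never invokes it in a load-bearing way. Without (i) and (ii) there is no way to prevent $\rho$ from being very negative on the $\mu$-bubble, where $h_{\epsilon,\beta}$ is still large, and the conclusion $H_i\to 0$ does not follow.
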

\begin{proof}
The proof is based on the $\mu$-bubble theory of Gromov from his four lecture \cite{Gromov2019}. First we point out that from the asymptotic flatness there is a positive constant $r_0$ such that coordinate spheres $S_r:=\{|x|=r\}$ with $r\geq r_0$ are all mean-convex with respect to the outward unit normal. Let us denote
$$
H_0:=\min_{S_{r_0}} H_{S_{r_0}}.
$$


In the following, we are going to construct the desired $\mu$-bubble. We start with the construction of suitable functions for later use. Through a standard mollification of the signed distance function to $S_{r_0}$, it is not difficult to construct a smooth function $\rho:\mathbb R^3\setminus\{O\}\to (-\infty,+\infty)$ satisfying
\begin{itemize}
\item $\rho(x)\to -\infty$ as $x\to O$, $\rho(x)\to +\infty$ as $x\to\infty$, and $\lip\rho<1$;
\item $B_{r_0}\setminus\{O\}=\{\rho<0\}$, where $B_{r_0}$ is the Euclidean $r_0$-ball.
\end{itemize}
For suitable prescribed function we take
$$
h_{\epsilon,\beta}:\left(-\frac{4\beta}{3\epsilon},+\infty\right)\to (\epsilon,+\infty),\quad t\mapsto \epsilon\coth\left(\frac{3}{4}\epsilon t+\beta\right),
$$
where $\beta$ is a large positive constant to be determined later. Through a direct computation it is easy to check the following properties of $h_{\epsilon,\beta}$:
\begin{itemize}
\item $h_{\epsilon,\beta}(t)\to +\infty$ as $t\to -\frac{4\beta}{3\epsilon}$ and $h_{\epsilon,\beta}'< 0$;
\item $h_{\epsilon,\beta}(t)\to \epsilon$ for any fixed $t$ as $\beta\to +\infty$;
\item $h_{\epsilon,\beta}$ satisfies the equation
$$
2h_{\epsilon,\beta}'+\frac{3}{2}h_{\epsilon,\beta}^2=\frac{3}{2}\epsilon^2.
$$
\end{itemize}

Next we work with $\epsilon<H_0$. In this case, we can take $\beta$ large enough to guarantee 
$
h_{\epsilon,\beta}(0)<H_0
$.
The $\mu$-bubble problem is now set as follows. Take
$$
M_{\epsilon,\beta}=\left\{x\in \mathbb R^3\setminus\{O\}\left|\rho(x)>-\frac{4\beta}{3\epsilon}\right.\right\}\cap \bar B_{r_0}
$$
and
$$
\mathcal C_{\epsilon,\beta}=\left\{\begin{array}{c}\mbox{Caccioppoli sets $\Omega\subset \bar B_{r_0}$ containing the origin $O$}\\
\mbox{such that $\bar B_{r_0}\setminus\Omega$ has compact closure in $M_{\epsilon,\beta}$.}
\end{array}\right\},
$$
where $\bar B_{r_0}$ is denoted to be the closure of $B_{r_0}$.
Consider the functional 
$$
\mathcal A_{\epsilon,\beta}:\mathcal C_{\epsilon,\beta}\to \mathbb R,\quad \Omega\mapsto \mathcal H^2_g(\partial\Omega)+\int_{M_{\epsilon,\beta}}\chi_{\bar B_{r_0}\setminus\Omega}\,h_{\epsilon,\beta} \circ \rho\,\mathrm d \mathcal H^3_g,
$$
where $\chi_{\bar B_{r_0}\setminus\Omega}$ is the characteristic function given by
$$
\chi_{\bar B_{r_0}\setminus\Omega}(x)=\left\{
\begin{array}{cc}
1,&x\in \bar B_{r_0}\setminus\Omega;\\
0,&x\notin \bar B_{r_0}\setminus\Omega.
\end{array}\right.
$$
It follows from \cite[Proposition 2.1]{Zhu2021} of the author (see also \cite{CL2020}) that there is a smooth region $\Omega_{\epsilon,\beta}$ in $\mathcal C_{\epsilon,\beta}$ such that
\begin{equation}\label{Eq: minimizing}
\mathcal A_{\epsilon,\beta}(\Omega_{\epsilon,\beta})=\min_{\Omega\in\mathcal C_{\epsilon,\beta}}\mathcal A_{\epsilon,\beta}(\Omega).
\end{equation}

The proof is then completed by Lemma \ref{Lem: characterization} below. In detail, we take a sequence $\epsilon_i\to 0$ as $i\to +\infty$ and $\beta_i$ large enough such that $\Omega_{\epsilon_i,\beta_i}$ satisfies those properties listed in Lemma \ref{Lem: characterization}. Then we choose $\Sigma_i$ to be the strictly outer-minimizing surface $\Sigma_{\epsilon_i,\beta_i,O}$ there. Clearly we have
$$
H_i=H_{\epsilon_i,\beta_i,O}\to 0\mbox{ as }i\to+\infty,
$$
and
$$
A_g\leq \mathcal H^2_g(\Sigma_i)\leq \mathcal H^2_g(S_{r_0}).
$$
This completes the proof.
\end{proof}

\begin{lemma}\label{Lem: characterization}
We have the following properties for $\Omega_{\epsilon,\beta}$:
\begin{itemize}
\item the exterior $\Omega_{\epsilon,\beta}^e$ of $\Omega_{\epsilon,\beta}$ is connected;

\item the component $\Omega_{\epsilon,\beta,O}$ of $\Omega_{\epsilon,\beta}$ containing the origin $O$ is homeomorphic to a $3$-ball, and its boundary $\Sigma_{\epsilon,\beta,O}$ is a smoothly embedded $2$-sphere;
\item $\Sigma_{\epsilon,\beta,O}$ is strictly outer-minimizing;
\item the mean curvature $H_{\epsilon,\beta,O}$ of $\Sigma_{\epsilon,\beta,O}$ with respect to the outward unit normal satisfies $0<H_{\epsilon,\beta,O}<2\epsilon$ for $\beta$ large enough.
\end{itemize}
\end{lemma}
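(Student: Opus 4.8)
The plan is to read all four items off the Euler--Lagrange analysis of the minimizer $\Omega_{\epsilon,\beta}$ from \eqref{Eq: minimizing}, together with a handful of competitor comparisons. The first variation gives that on $\partial\Omega_{\epsilon,\beta}$ the mean curvature with respect to the unit normal $\nu$ pointing out of $\Omega_{\epsilon,\beta}$ equals $h_{\epsilon,\beta}\circ\rho$, which is $>\epsilon>0$; since the weight $h_{\epsilon,\beta}\circ\rho$ blows up as $\rho\to -\tfrac{4\beta}{3\epsilon}$, the free boundary stays bounded away from the inner edge $\{\rho=-\tfrac{4\beta}{3\epsilon}\}$ and is a smooth embedded surface by the cited existence/regularity result. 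It does not touch $S_{r_0}$ either: at a would-be contact point $p\in S_{r_0}$ the boundary is tangent to $S_{r_0}$ from inside $\bar B_{r_0}$, so the one-sided Euler--Lagrange inequality coming from the constraint $\Omega\subset\bar B_{r_0}$ together with the mean curvature comparison force $h_{\epsilon,\beta}(0)=H_{\partial\Omega_{\epsilon,\beta}}(p)\ge H_{S_{r_0}}(p)\ge H_0$, contradicting the choice $h_{\epsilon,\beta}(0)<H_0$. Hence $\partial\Omega_{\epsilon,\beta}$ is a smooth closed surface compactly contained in $B_{r_0}$ with positive mean curvature $h_{\epsilon,\beta}\circ\rho$, which already gives the positivity in the last bullet.

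For the topology I would argue by cut-and-paste. A component $U$ of $\bar B_{r_0}\setminus\Omega_{\epsilon,\beta}$ surrounded by $\Omega_{\epsilon,\beta}$ could be adjoined to $\Omega_{\epsilon,\beta}$, which does not increase the area and discards the positive integral $\int_U h_{\epsilon,\beta}\circ\rho$, contradicting minimality; likewise a component of $\Omega_{\epsilon,\beta}$ other than $\Omega_{\epsilon,\beta,O}$ can be deleted, which lowers $\mathcal A_{\epsilon,\beta}$ for small components by the isoperimetric inequality, and iterating such moves shows $\Omega_{\epsilon,\beta}^e=\bar B_{r_0}\setminus\Omega_{\epsilon,\beta}$ is connected. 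Next, each component $\Sigma'$ of $\partial\Omega_{\epsilon,\beta}$ is stable for $\mathcal A_{\epsilon,\beta}$; testing the second variation with $\phi=1$, rewriting $\Ric(\nu,\nu)$ through the Gauss equation, and using $R(g)\ge0$, $|A|^2\ge\tfrac12H^2$, $\lip\rho<1$ and the prescribed identity $2h_{\epsilon,\beta}'+\tfrac32h_{\epsilon,\beta}^2=\tfrac32\epsilon^2$ (chosen precisely so that all the error terms collapse to $\tfrac34\epsilon^2$), one obtains $\tfrac34\epsilon^2\,\mathcal H^2_g(\Sigma')\le\int_{\Sigma'}K_{\Sigma'}=2\pi\chi(\Sigma')$, so $\chi(\Sigma')>0$ and, $\Sigma'$ being closed orientable, $\Sigma'\cong\mathbb S^2$. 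Since $\Omega_{\epsilon,\beta,O}$ reaches the puncture $O$, its boundary cannot contain an inner sphere cutting off a hole (such a hole is fillable after the above reductions, contradicting minimality), so $\Sigma_{\epsilon,\beta,O}$ is a single embedded $2$-sphere; by the Schoenflies theorem the region it bounds on the $O$-side is, together with $O$, a $3$-ball.

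For the strictly outer-minimizing property: if $\Sigma'$ encloses $\Omega_{\epsilon,\beta,O}$ then, after reducing to the case $\Sigma'\subset\bar B_{r_0}$ (for $\Sigma'$ reaching large $|x|$ the inequality is trivial from $\mathcal H^2_g(\Sigma_{\epsilon,\beta,O})\le\mathcal H^2_g(S_{r_0})$), the region between $\Sigma_{\epsilon,\beta,O}$ and $\Sigma'$ can be adjoined to $\Omega_{\epsilon,\beta,O}$ to produce an element of $\mathcal C_{\epsilon,\beta}$; comparing the values of $\mathcal A_{\epsilon,\beta}$ and using $h_{\epsilon,\beta}\circ\rho>0$ gives $\mathcal H^2_g(\Sigma')\ge\mathcal H^2_g(\Sigma_{\epsilon,\beta,O})$, with strictness when $\Sigma'\neq\Sigma_{\epsilon,\beta,O}$ following either from the above comparison (if $\Sigma'$ is disjoint from $\Sigma_{\epsilon,\beta,O}$, the enclosed competitor strictly beats $\Omega_{\epsilon,\beta}$) or from the strong maximum principle (if $\Sigma'$ touches $\Sigma_{\epsilon,\beta,O}$, using $h_{\epsilon,\beta}\circ\rho>0$).

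Finally, the bound $H_{\epsilon,\beta,O}<2\epsilon$ is the delicate point and I expect it to be the main obstacle. Since $H_{\epsilon,\beta,O}=h_{\epsilon,\beta}\circ\rho$ and $h_{\epsilon,\beta}$ is decreasing with $h_{\epsilon,\beta}(t)\to\epsilon$ as $\beta\to+\infty$ for each fixed $t$, it suffices to trap $\Sigma_{\epsilon,\beta,O}$ in a compact subset of $\mathbb R^3\setminus\{O\}$ depending only on $\epsilon$ (so that $\rho\ge -C_\epsilon$ there), and then take $\beta$ large. I would combine: the uniform bound $\mathcal H^2_g(\Sigma_{\epsilon,\beta,O})\le\mathcal H^2_g(S_{r_0})$ from comparison with $\bar B_{r_0}$; the stability estimate $\lambda_1\big({-}\Delta_{\Sigma_{\epsilon,\beta,O}}+K_{\Sigma_{\epsilon,\beta,O}}\big)\ge\tfrac34\epsilon^2$ derived above; the comparison with the competitor $\Omega_{\epsilon,\beta}\cup\{\rho\le c\}$, which forces $\mathcal H^2_g\big(\Sigma_{\epsilon,\beta,O}\cap\{\rho\le c\}\big)\le\mathcal H^2_g(\{\rho=c\})$ for every $c\le 0$; and, crucially, Gromov's band-width estimate, which forbids $\Sigma_{\epsilon,\beta,O}$ from running a long thin finger into the end around $O$. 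Once $\Sigma_{\epsilon,\beta,O}\subset\{\rho\ge -C_\epsilon\}$ is established, $h_{\epsilon,\beta}\circ\rho\le h_{\epsilon,\beta}(-C_\epsilon)\to\epsilon$ as $\beta\to+\infty$, so $0<H_{\epsilon,\beta,O}<2\epsilon$ once $\beta$ is large enough.
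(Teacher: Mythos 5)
Your proposal tracks the paper on the first three bullets and gets the key structural ideas right (competitor comparisons with the functional $\mathcal A_{\epsilon,\beta}$, the stability inequality combined with Gauss--Bonnet and the ODE $2h'_{\epsilon,\beta}+\tfrac32 h_{\epsilon,\beta}^2=\tfrac32\epsilon^2$ to force $\chi>0$, and the reduction to competitors inside $\bar B_{r_0}$ via strict outer-minimality of $S_{r_0}$). Two small remarks on those bullets. First, for the connectedness of $\Sigma_{\epsilon,\beta,O}$ the ``fillable hole'' argument is essentially circular without the already-proved connectedness of $\Omega_{\epsilon,\beta}^e$; the paper instead uses the intersection-number argument in $H_1(\mathbb R^3,\mathbf Z)=0$, which is cleaner and you should record it explicitly. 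Second, for strictness in outer-minimality the strong maximum principle is not the right tool: the competitor $\Sigma'$ has no a priori regularity, so you cannot invoke a tangency comparison. The paper extracts strictness purely from the equality case of the minimizing comparison: equality forces $\int_{\Omega_{r_0}\setminus\Omega_{\epsilon,\beta}}h_{\epsilon,\beta}\circ\rho=0$, hence $\Omega_{r_0}\subset\Omega_{\epsilon,\beta}$ because $h_{\epsilon,\beta}>\epsilon>0$, and then $\Sigma=\Sigma_{r_0}=\Sigma_{\epsilon,\beta,O}$.

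The genuine gap is in the fourth bullet, which you correctly flag as delicate. Your proposed comparison with $\Omega_{\epsilon,\beta}\cup\{\rho\le c\}$ gives $\mathcal H^2_g(\partial\Omega_{\epsilon,\beta}\cap\{\rho\le c\})\le\mathcal H^2_g(\{\rho=c\})$, which bounds the \emph{area} of $\Sigma_{\epsilon,\beta,O}$ at depth but does not prevent the surface from sliding entirely to very negative $\rho$ as $\beta\to+\infty$; it is the wrong inequality for locating the surface. The paper instead exploits the \emph{outer-minimizing} property you just proved: if $\Sigma_{\epsilon,\beta,O}$ sat entirely below a level $-T$ with $T=\mathcal H^2_g(S_{r_0})/(\epsilon A_g)$, then for every regular level $s\in(-T,0)$ the slice $\{\rho=s\}$ encloses $\Omega_{\epsilon,\beta,O}$, so $\mathcal H^2_g(\{\rho=s\}\setminus\Omega_{\epsilon,\beta})\ge\mathcal H^2_g(\Sigma_{\epsilon,\beta,O})\ge A_g$. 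Feeding this into the coarea formula together with $\lip\rho<1$ and $h_{\epsilon,\beta}>\epsilon$ gives
\[
\int_{\bar B_{r_0}\setminus\Omega_{\epsilon,\beta}}h_{\epsilon,\beta}\circ\rho\,\mathrm d\mathcal H^3_g > \epsilon\,A_g\,T=\mathcal H^2_g(S_{r_0}),
\]
which contradicts the basic comparison $\mathcal A_{\epsilon,\beta}(\Omega_{\epsilon,\beta})\le\mathcal A_{\epsilon,\beta}(B_{r_0})$. This pins a point of $\Sigma_{\epsilon,\beta,O}$ to the $\beta$-independent compact set $V_\epsilon=\{-T\le\rho\le0\}$, and only then does the Gromov band-width diameter bound $D(\epsilon)=4\pi/(3\epsilon)$ confine the whole sphere to a fixed $K_\epsilon$, after which $h_{\epsilon,\beta}\circ\rho<2\epsilon$ on $K_\epsilon$ for $\beta$ large. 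Note that this step uses $A_g>0$ essentially; your sketch omits that dependence. So you have the tools but not the argument, and the specific comparison you wrote down will not close the loop.
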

\begin{proof}[Proof for Lemma \ref{Lem: characterization}]
We verify these properties one by one.

For the first property we argue by contradiction. Suppose that the exterior $\Omega_{\epsilon,\beta}^e$ of $\Omega_{\epsilon,\beta}$ has more than one components,  then we know from $\Omega_{\epsilon,\beta}\subset \bar B_{r_0}$ that there exists at least one bounded component of $\Omega_{\epsilon,\beta}^e$ contained in $M_{\epsilon,\beta}$, denoted by $U_b$. Notice that the region $\Omega_{\epsilon,\beta}\cup \bar U_b$ is still in $\mathcal C_{\epsilon,\beta}$ and we can compute
\[
\begin{split}
\mathcal A_{\epsilon,\beta}(\Omega_{\epsilon,\beta}\cup \bar U_b)&=\mathcal H^2_g(\partial\Omega_{\epsilon,\beta}\setminus \partial U_b)+\int_{M_{\epsilon,\beta}}\chi_{\bar B_{r_0}\setminus(\Omega_{\epsilon,\beta}\cup \bar U_b)}\,h_{\epsilon,\beta}\circ \rho\,\mathrm d\mathcal H^3_g\\
&=\mathcal A_{\epsilon,\beta}(\Omega_{\epsilon,\beta})-\mathcal H^2_g(\partial U_b)-\int_{\bar U_b}h_{\epsilon,\beta}\circ \rho\,\mathrm d\mathcal H^3_g\\
&<\mathcal A_{\epsilon,\beta}(\Omega_{\epsilon,\beta}),
\end{split}
\]
where the last inequality comes from the facts $\mathcal H^2_g(\partial U_b)>0$ and $h_{\epsilon,\beta}>\epsilon$. This obviously contradicts to the minimizing property \eqref{Eq: minimizing}.

For the second property we just need to show that $\Sigma_{\epsilon,\beta,O}$ is a smoothly embedded $2$-sphere, with which the $3$-ball topology of $\Omega_{\epsilon,\beta,O}$ comes from Alexander's Theorem \cite[Theorem 1.1]{Hatcher2007}. As the first step, we show that $\Sigma_{\epsilon,\beta,O}$ is connected. Otherwise, from the connectedness of $\Omega^e_{\epsilon,\beta}$ and $\Omega_{\epsilon,\beta,O}$ we can construct a closed curve in $\mathbb R^3$ intersecting some component of $\Sigma_{\epsilon,\beta,O}$ only once. On the other hand, we have the simple fact $H_1(\mathbb R^3,\mathbf Z)= 0$. This yields that the algebraic intersection number of this closed curve with any closed surface should be zero, which leads to a contradiction. Now let us confirm the spherical topology of $\Sigma_{\epsilon,\beta,O}$ through the classical variation argument. From the first variation we can compute
$$
\int_{\Sigma_{\epsilon,\beta,O}}(H_{\epsilon,\beta,O}-h_{\epsilon,\beta}\circ \rho)\psi\,\mathrm d\mathcal H^2_g=0\mbox{ for any }\psi\in C_0^\infty(\Sigma_{\epsilon,\beta,O}),
$$
where $H_{\epsilon,\beta,O}$ is the mean curvature with respect to the outward unit normal. As a consequence we have $H_{\epsilon,\beta,O}=h_{\epsilon,\beta}\circ \rho$ on $\Sigma_{\epsilon,\beta,O}$. From the second variation we obtain
\begin{equation}\label{Eq: second variation}
\begin{split}
&2\int_{\Sigma_{\epsilon,\beta,O}}|\nabla \psi|^2\,\mathrm d\mathcal H^2_g\geq\\
&\int_{\Sigma_{\epsilon,\beta,O}}\left(R(g)-R_{\epsilon,\beta,O}+H_{\epsilon,\beta,O}^2+|A_{\epsilon,\beta,O}|^2+2\partial_\nu(h_{\epsilon,\beta}\circ \rho)\right)\psi^2\,\mathrm d\mathcal H^2_g
\end{split}
\end{equation}
for any $\psi\in C_0^\infty(\Sigma_{\epsilon,\beta,O})$, where $R_{\epsilon,\beta,O}$ and $A_{\epsilon,\beta,O}$ are scalar curvature and the second fundamental form of $\Sigma_{\epsilon,\beta,O}$ respectively. After taking $\psi\equiv 1$ we see
\[
\begin{split}
4\pi \chi(\Sigma_{\epsilon,\beta,O})&\geq \int_{\Sigma_{\epsilon,\beta,O}}R(g)+H_{\epsilon,\beta,O}^2+|A_{\epsilon,\beta,O}|^2+2\partial_\nu(h_{\epsilon,\beta}\circ \rho)\,\mathrm d\mathcal H^2_g\\
&\geq \int_{\Sigma_{\epsilon,\beta,O}} R(g)+\left(\frac{3}{2}h_{\epsilon,\beta}^2+2h_{\epsilon,\beta}'\right)\circ \rho\,\mathrm d\mathcal H^2_g>0,
\end{split}
\]
where we have used the facts $\lip\rho<1$ and $R(g)\geq 0$ as well as
\begin{equation}\label{Eq: ODE}
2h_{\epsilon,\beta}'+\frac{3}{2}h_{\epsilon,\beta}^2=\frac{3}{2}\epsilon^2>0.
\end{equation}
It follows that $\Sigma_{\epsilon,\beta,O}$ is a $2$-sphere.

For the third property we take $\Sigma$ to be an arbitrary closed surface that encloses a bounded region $\Omega$ containing $\Omega_{\epsilon,\beta,O}$ as a subset. Recall that there is a mean-convex foliation outside $B_{r_0}$, so the coordinate sphere $S_{r_0}$ is strictly outer-minimizing. Let us take a new surface $\Sigma_{r_0}=\partial \Omega_{r_0}$ with $\Omega_{r_0}=\Omega\cap B_{r_0}$. Then we have
$$
\mathcal H^2_g(\Sigma)-\mathcal H^2_g(\Sigma_{r_0})=\mathcal H^2_g\left(\partial(\Omega\cup B_{r_0})\right)-\mathcal H^2_g(S_{r_0})\geq 0,
$$
where the equality holds if and only if $\Omega \subset B_{r_0}$ and $\Sigma_{r_0}=\Sigma$. Since $\Omega_{r_0}$ is contained in $B_{r_0}$, the union $\Omega_{\epsilon,\beta}\cup\Omega_{r_0}$ is still in the collection $\mathcal C_{\epsilon,\beta}$. From the minimizing property of $\Omega_{\epsilon,\beta}$ we have $\mathcal A_{\epsilon,\beta}(\Omega_{\epsilon,\beta}\cup\Omega_{r_0})\geq \mathcal A_{\epsilon,\beta}(\Omega_{\epsilon,\beta})$. We can compute
\begin{equation}\label{Eq: comparison}
\mathcal H^2_g(\Sigma_{r_0}\setminus \Omega_{\epsilon,\beta})\geq \mathcal H^2_g(\partial\Omega_{\epsilon,\beta}\cap \bar\Omega_{r_0})+\int_{\Omega_{r_0}\setminus\Omega_{\epsilon,\beta}}h_{\epsilon,\beta}\circ \rho\,\mathrm d\mathcal H^3_g,
\end{equation}
where we have used the relation
$$
\mathcal H^2_g\left(\partial(\Omega_{\epsilon,\beta}\cup \Omega_{r_0})\right)=\mathcal H^2_g(\partial\Omega_{\epsilon,\beta}\setminus\bar\Omega_{r_0})+\mathcal H^2_g(\partial \Omega_{r_0}\setminus \Omega_{\epsilon,\beta}).
$$
Due to the fact $\Omega_{\epsilon,\beta,O}\subset \Omega_{r_0}$ we see $\Sigma_{\epsilon,\beta,O}\subset \bar \Omega_{r_0}$ and so
\[
\mathcal H_{g}^2(\partial\Omega_{\epsilon,\beta}\cap \bar\Omega_{r_0})\geq \mathcal H^2_g(\Sigma_{\epsilon,\beta,O}).
\]
After substituting this into \eqref{Eq: comparison} we obtain the desired inequality
\begin{equation}\label{Eq: final comparison}
\mathcal H^2_g(\Sigma)\geq \mathcal H^2_g(\Sigma_{r_0})\geq\mathcal H^2_g(\Sigma_{r_0}\setminus\Omega_{\epsilon,\beta})\geq \mathcal H^2_g(\Sigma_{\epsilon,\beta,O}).
\end{equation}
When equality holds we have $$\mathcal H^2_g(\Sigma_{r_0})=\mathcal H^2_g(\Sigma_{r_0}\setminus\Omega_{\epsilon,\beta}) \mbox{ and }
\int_{\Omega_{r_0}\setminus\Omega_{\epsilon,\beta}}h_{\epsilon,\beta}\circ \rho\,\mathrm d\mathcal H^3_g=0.
$$
Combined with the fact $h_{\epsilon,\beta}>\epsilon$ we conclude that $\Sigma_{r_0}$ is outside $\Omega_{\epsilon,\beta}$ and also $\Omega_{\epsilon,\beta,O}\subset \Omega_{r_0}\subset \Omega_{\epsilon,\beta}$, which implies $\Omega_{r_0}$ is simply the union of several components of $\Omega_{\epsilon,\beta}$. From the fact $$\mathcal H^2_g(\Sigma)=\mathcal H^2_g(\Sigma_{r_0})=\mathcal H^2_g(\Sigma_{\epsilon,\beta,O})$$ we obtain $\Sigma=\Sigma_{r_0}=\Sigma_{\epsilon,\beta,O}$.
This verifies that $\Sigma_{\epsilon,\beta,O}$ is strictly outer-minimizing. 

For the fourth property we would like to show that $\Sigma_{\epsilon,\beta,O}$ lies in a compact subset $K_\epsilon$ of $\mathbb R^3\setminus\{O\}$ independent of $\beta$, from which we can take $\beta$ large enough such that $0<H_{\epsilon,\beta,O}<2\epsilon$. First we point out that $\Sigma_{\epsilon,\beta,O}$ has its diameter bounded from above by some positive constant $D=D(\epsilon)$. Recall from \eqref{Eq: second variation} and \eqref{Eq: ODE} that we have $$\lambda_1\left(-\Delta+\frac{1}{2}R_{\epsilon,\beta,O}\right)\geq \frac{3}{4}\epsilon^2.$$
Denote $u$ to be the corresponding first eigenfunction. It is standard that $u$ is a positive smooth function on $\Sigma_{\epsilon,\beta,O}$ and so we can consider the warped metric 
$$g_{warp}=g_{\epsilon,\beta,O}+u^2\mathrm d\theta^2\mbox{ on } \Sigma_{\epsilon,\beta,O}\times \mathbb S^1,$$ where $g_{\epsilon,\beta,O}$ is the induced metric of $\Sigma_{\epsilon,\beta,O}$. A straightforward calculation shows
$$
R(g_{warp})=R_{\epsilon,\beta,O}-\frac{2\Delta u}{u}\geq \frac{3}{2}\epsilon^2>0.
$$
It follows from Gromov's band width estimate \cite{Gromov2018} that the diameter of $\Sigma_{\epsilon,\beta,O}$ is no greater than
$
D(\epsilon)=\frac{4\pi}{3\epsilon}
$. Second we argue that $\Sigma_{\epsilon,\beta,O}$ must intersect with the compact subset 
$$
V_\epsilon=\left\{-\frac{\mathcal H^2_g(S_{r_0})}{\epsilon A_g}\leq \rho \leq 0\right\}.
$$
Otherwise, from the facts $\Omega_{\epsilon,\beta,O}\subset B_{r_0}$ and $B_{r_0}\setminus\{O\}=\{\rho <0\}$ we conclude that $\bar\Omega_{\epsilon,\beta,O}$ is contained in $\{\rho <-\mathcal H^2_g(S_{r_0})/\epsilon A_g\}$. In particular, given any negative regular value $s$ of $\rho$ no less than $-\mathcal H^2_g(S_{r_0})/\epsilon A_g$ we have from \eqref{Eq: final comparison} that
$$
\mathcal H^2_g(\{\rho =s\}\setminus \Omega_{\epsilon,\beta})\geq \mathcal H^2_g(\Sigma_{\epsilon,\beta,O})\geq A_g.
$$
From the minimizing property of $\Omega_{\epsilon,\beta}$ we have $\mathcal A_{\epsilon,\beta}(\Omega_{\epsilon,\beta})\leq \mathcal A_{\epsilon,\beta}(B_{r_0})$, which reads
$$
\mathcal H^2_g(\partial\Omega_{\epsilon,\beta})+\int_{\bar B_{r_0}\setminus\Omega_{\epsilon,\beta}}h_{\epsilon,\beta}\circ \rho\,\mathrm d\mathcal H^3_g\leq \mathcal H^2_g(S_{r_0}).
$$
From the coarea formula we can compute
\[
\begin{split}
\int_{\bar B_{r_0}\setminus\Omega_{\epsilon,\beta}}h_{\epsilon,\beta}\circ \rho\,\mathrm d\mathcal H^3_g&\geq \int_{\left\{-\frac{\mathcal H^2_g(S_{r_0})}{\epsilon A_g}\leq \rho \leq 0\right\}\setminus\Omega_{\epsilon,\beta}}h_{\epsilon,\beta}\circ \rho\,\mathrm d\mathcal H^3_g\\
&\geq \int_{-\frac{\mathcal H^2_g(S_{r_0})}{\epsilon A_g}}^0\mathrm ds\int_{\{\rho=s\}\setminus\Omega_{\epsilon,\beta}}h_{\epsilon,\beta}\circ \rho\,\mathrm d\mathcal H^3_g\\
&>\mathcal H^2_g(S_{r_0}),
\end{split}
\]
where we have used $\lip\rho<1$ in the second line and $h_{\epsilon,\beta}\circ \rho>\epsilon$ in the third line. This yields $\mathcal H^2_g(\partial\Omega_{\epsilon,\beta})<0$, which is impossible. The proof is now completed by taking $K_\epsilon$ to be the closure of $D(\epsilon)$-neighborhood of $V_\epsilon$.
\end{proof}

\begin{proof}[Proof of \eqref{Eq: Penrose}]
If the constant $A_g$ is zero, then \eqref{Eq: Penrose} simply comes from the Riemannian positive mass theorem for asymptotically flat manifolds with arbitrary ends \cite{LLU2022,Zhu2022}. Next we deal with the case $A_g>0$. In this case, we can take a sequence of strictly outer-minimizing $2$-spheres $\Sigma_i$ from Proposition \ref{Prop: sequence} with $\mathcal H^2_g(\Sigma_i)\geq A_g$ and $H_i\to 0$ as $i\to +\infty$. Denote $M_i$ to be the unbounded region outside $\Sigma_i$ and clearly we have 
$$H_2(M_i,\partial M_i,\mathbf Z)=H_2(\mathbb R^3-B,\partial B,\mathbf Z)=0.$$ After applying Theorem \ref{Thm: Penrose HI} to $(M_i,g)$ we obtain 
\[
\begin{split}
m&\geq \sqrt{\frac{\mathcal H^2_g(\Sigma_i)}{16\pi}}\left(1-\frac{1}{16\pi}\int_{\Sigma_i}H_i^2\mathrm d\mathcal H^2_g\right)\\
&\geq  \sqrt{\frac{A_g}{16\pi}}-(16\pi)^{-\frac{3}{2}}\mathcal H^2_g(\Sigma_i)^{\frac{3}{2}}\|H_i\|_{L^\infty(\Sigma_i)}^2.
\end{split}
\]
Recall from Proposition \ref{Prop: sequence} that $\mathcal H^2_g(\Sigma_i)$ is uniformly bounded. After taking $i\to+\infty$ and using the fact $H_i\to 0$, we arrive at the desired Riemannian Pensore inequality
$$
m\geq \sqrt{\frac{A_g}{16\pi}}.
$$
\end{proof}
\section{Proof of rigidity}\label{Sec: proof of rigidity}
In this section we are going to prove the rigidity part in Theorem \ref{Thm: main}. The idea is trying to find a closed minimal surface enclosing the origin $O$ under the assumption
\begin{equation}\label{Eq: mass equality}
m=\sqrt{\frac{A_g}{16\pi}}.
\end{equation}

We begin with the following area estimate.
\begin{lemma}\label{Lem: area estimate}
Fix an Euclidean ball $B_{r_0}$. There are positive constants $\epsilon_0=\epsilon_0(r_0,g)$ and $\Lambda_0=\Lambda_0(r_0,g)$ such that if $\Sigma_\epsilon$ is a strictly outer-minimizing $2$-sphere in $B_{r_0}$ with constant mean curvature $0<\epsilon<\epsilon_0$ which encloses the origin $O$, then we have
$
\mathcal H^2_g(\Sigma_\epsilon)\leq A_g+\Lambda_0\epsilon^2
$.
\end{lemma}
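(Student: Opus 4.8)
The plan is to apply the ADM--Hawking mass inequality (Theorem~\ref{Thm: Penrose HI}) to the region outside $\Sigma_\epsilon$ and then feed in the rigidity hypothesis \eqref{Eq: mass equality} that governs this section (under which, in particular, $A_g>0$). First I would describe the exterior region. Since $\Sigma_\epsilon$ is a smoothly embedded $2$-sphere contained in the open ball $B_{r_0}$, the Jordan--Brouwer theorem splits $\mathbb R^3$ into a bounded and an unbounded component; as $\mathbb R^3\setminus\bar B_{r_0}$ is connected and unbounded it lies in the unbounded one, so the bounded region $\Omega_\epsilon$ enclosed by $\Sigma_\epsilon$ satisfies $\Omega_\epsilon\subset\bar B_{r_0}$, and $O\in\Omega_\epsilon$ by assumption. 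By Alexander's theorem \cite[Theorem 1.1]{Hatcher2007} (used already in the proof of Lemma~\ref{Lem: characterization}) $\Omega_\epsilon$ is a $3$-ball, so the exterior region $M_\epsilon:=\mathbb R^3\setminus\Omega_\epsilon$, as a manifold with boundary, is modeled on $\mathbb R^3\setminus B_1$: it has a single end $\mathcal E$ (the infinity of $\mathbb R^3$), it satisfies $H_2(M_\epsilon,\partial M_\epsilon,\mathbf Z)=0$, and its boundary $\partial M_\epsilon=\Sigma_\epsilon$ is connected and outer-minimizing by hypothesis. Together with completeness, asymptotic flatness and $R(g)\ge 0$, these are exactly the assumptions of Theorem~\ref{Thm: Penrose HI}.

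Applying that theorem to $(M_\epsilon,g)$ gives
\[
m\ \ge\ \sqrt{\frac{\mathcal H^2_g(\Sigma_\epsilon)}{16\pi}}\left(1-\frac1{16\pi}\int_{\Sigma_\epsilon}H^2\,\mathrm d\sigma_g\right),
\]
where $H$ is the mean curvature of $\Sigma_\epsilon$ with respect to the unit normal pointing into $M_\epsilon$. Since $\Sigma_\epsilon$ has constant mean curvature $\epsilon$, we have $H^2\equiv\epsilon^2$, so $\int_{\Sigma_\epsilon}H^2\,\mathrm d\sigma_g=\epsilon^2\,\mathcal H^2_g(\Sigma_\epsilon)$; substituting this and the rigidity identity $m=\sqrt{A_g/16\pi}$ from \eqref{Eq: mass equality} yields
\[
\sqrt{A_g}\ \ge\ \sqrt{\mathcal H^2_g(\Sigma_\epsilon)}\left(1-\frac{\epsilon^2}{16\pi}\,\mathcal H^2_g(\Sigma_\epsilon)\right).
\]

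To conclude I would first record the crude a priori bound: the coordinate sphere $S_{r_0}$ encloses $B_{r_0}\setminus\bar\Omega_\epsilon$ together with $\Sigma_\epsilon$, so the outer-minimizing property of $\Sigma_\epsilon$ (Definition~\ref{Defn: outer-minimizing}) gives $\mathcal H^2_g(\Sigma_\epsilon)\le\mathcal H^2_g(S_{r_0})=:\Lambda_1(r_0,g)$. Then I would fix $\epsilon_0=\epsilon_0(r_0,g)>0$ with $\epsilon_0^2\Lambda_1\le 8\pi$, so that for $0<\epsilon<\epsilon_0$ the bracketed factor is $\ge\frac12>0$; dividing the last inequality by it, using $\frac1{1-x}\le 1+2x$ on $[0,\frac12]$, squaring, and absorbing the resulting $\epsilon^4$-term into the $\epsilon^2$-term (again via $\epsilon<\epsilon_0$ and $\mathcal H^2_g(\Sigma_\epsilon)\le\Lambda_1$) produces $\mathcal H^2_g(\Sigma_\epsilon)\le A_g+\Lambda_0\epsilon^2$ with $\Lambda_0=\Lambda_0(r_0,g)$ assembled from $A_g$, $\Lambda_1$ and $\epsilon_0$. (Since $\Sigma_\epsilon$ is an admissible competitor in the definition of $A_g$ one also has $\mathcal H^2_g(\Sigma_\epsilon)\ge A_g$ for free, so the area is in fact pinned to $A_g$ up to $O(\epsilon^2)$.) The main obstacle is not analytic but organizational: one must carefully verify that $M_\epsilon$ meets all the hypotheses of Theorem~\ref{Thm: Penrose HI} --- especially the homological condition $H_2(M_\epsilon,\partial M_\epsilon,\mathbf Z)=0$, which rests on Alexander's theorem and on the containment $\Omega_\epsilon\subset\bar B_{r_0}$ --- and then keep explicit track that $\epsilon_0$ and $\Lambda_0$ depend only on $r_0$ and $g$, and not on the particular surface $\Sigma_\epsilon$.
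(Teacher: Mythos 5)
Your proposal is correct and takes essentially the same approach as the paper: apply Theorem~\ref{Thm: Penrose HI} to the exterior region, substitute the rigidity hypothesis \eqref{Eq: mass equality}, use the outer-minimizing property to bound $\mathcal H^2_g(\Sigma_\epsilon)\leq\mathcal H^2_g(S_{r_0})$, and finish with elementary algebra. The only cosmetic difference is that the paper closes with the single inequality $(1-\alpha)^{-2}-1<8\alpha$ for $0<\alpha<\tfrac12$ (giving $\epsilon_0=\sqrt{8\pi/\mathcal H^2_g(S_{r_0})}$ and $\Lambda_0=\tfrac{1}{2\pi}A_g\mathcal H^2_g(S_{r_0})$ explicitly), whereas you split the last step into $(1-x)^{-1}\le 1+2x$, squaring, and absorbing the $\epsilon^4$ term.
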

\begin{proof}
From Theorem \ref{Thm: Penrose HI} we have
$$
\sqrt{\frac{\mathcal H^2_g(\Sigma_\epsilon)}{16\pi}}\left(1-\frac{\epsilon^2}{16\pi}\mathcal H^2_g(\Sigma_\epsilon)\right)\leq m=\sqrt{\frac{A_g}{16\pi}}. 
$$
Since $\Sigma_{\epsilon}$ is strictly outer-minimizing, we have $\mathcal H^2_g(\Sigma_{\epsilon})\leq \mathcal H^2_g(S_{r_0})$. Take
$$
\epsilon_0=\sqrt{\frac{8\pi}{\mathcal H^2_g(S_{r_0})}}\mbox{ and }\Lambda_0=\frac{1}{2\pi}A_g\mathcal H^2_g(S_{r_0}).
$$ 
Then for $0<\epsilon<\epsilon_0$ we have
$$
\mathcal H^2_g(\Sigma_\epsilon)\leq A_g\left(1-\frac{\epsilon^2}{16\pi}\mathcal H^2_g(S_{r_0})\right)^{-2}=A_g+\Lambda_0\epsilon^2,
$$
where we use the following elementary inequality
$$
(1-\alpha)^{-2}-1<8\alpha\mbox{ for }0<\alpha<\frac{1}{2}.
$$
This completes the proof.
\end{proof}
In order to find a closed minimal surface we also need the following volume control in our construction of strictly outer-minimizing surfaces.
\begin{lemma}\label{Lem: volume estimate}
Let $B_{r_0}$, $\epsilon_0$ and $\Lambda_0$ be the same as in Lemma \ref{Lem: area estimate}. If $\Sigma_\epsilon$ is a strictly outer-minimizing $2$-sphere in $B_{r_0}$ with constant mean curvature $0<\epsilon<\epsilon_0$ enclosing the origin $O$, then for any $0<\tilde \epsilon<\epsilon$ we can find a strictly outer-minimizing $2$-sphere $\Sigma_{\tilde \epsilon}$ enclosed by $\Sigma_{\epsilon}$ with constant mean curvature $\tilde \epsilon$ enclosing the origin $O$ and satisfying the volume estimate
\begin{equation}\label{Eq: volume estimate}
\mathcal H^3_g(\Omega_{\tilde\epsilon,\epsilon})\leq \frac{\Lambda_0\epsilon^2}{\tilde\epsilon},
\end{equation}
where $\Omega_{\tilde\epsilon,\epsilon}$ is the region bounded by $\Sigma_{\tilde\epsilon}$ and $\Sigma_\epsilon$.
\end{lemma}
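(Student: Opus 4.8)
The plan is to obtain $\Sigma_{\tilde\epsilon}$ as a subsequential limit, as $\beta\to+\infty$, of $\mu$-bubbles constructed inside the region $\Omega_\epsilon$ enclosed by $\Sigma_\epsilon$, running the construction of Proposition \ref{Prop: sequence} and Lemma \ref{Lem: characterization} with $\Sigma_\epsilon$ in the role of the coordinate sphere $S_{r_0}$. Write $\Omega_\epsilon$ for the region bounded by $\Sigma_\epsilon$ on the side of $O$, a topological $3$-ball. Since $g$ is complete on $\mathbb R^3\setminus\{O\}$, the point $O$ lies at infinite $g$-distance, so mollifying the signed $g$-distance to $\Sigma_\epsilon$ (taken negative on the $O$-side) produces a smooth function $\rho_\epsilon$ on $\Omega_\epsilon$ with $\lip\rho_\epsilon<1$, $\rho_\epsilon\to-\infty$ towards $O$, $\rho_\epsilon=0$ on $\Sigma_\epsilon$, and $\{\rho_\epsilon<0\}$ equal to the interior of $\Omega_\epsilon$. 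One keeps the prescribed functions $h_{\tilde\epsilon,\beta}$ of Proposition \ref{Prop: sequence}; since $h_{\tilde\epsilon,\beta}(0)=\tilde\epsilon\coth\beta<\epsilon$ once $\beta$ is large, the surface $\Sigma_\epsilon$ — which is mean-convex with mean curvature $\epsilon$ and strictly outer-minimizing by hypothesis — plays exactly the barrier role that $S_{r_0}$ played before.

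For each large $\beta$, put $M_{\tilde\epsilon,\beta}=\{\rho_\epsilon>-\tfrac{4\beta}{3\tilde\epsilon}\}\cap\bar\Omega_\epsilon$ and minimize the functional $\mathcal A_{\tilde\epsilon,\beta}(\Omega)=\mathcal H^2_g(\partial\Omega)+\int_{M_{\tilde\epsilon,\beta}}\chi_{\bar\Omega_\epsilon\setminus\Omega}\,h_{\tilde\epsilon,\beta}\circ\rho_\epsilon\,\mathrm d\mathcal H^3_g$ over Caccioppoli sets $\Omega\subset\bar\Omega_\epsilon$ containing $O$ with $\bar\Omega_\epsilon\setminus\Omega\Subset M_{\tilde\epsilon,\beta}$ (existence by \cite[Proposition 2.1]{Zhu2021}); let $\Omega_{\tilde\epsilon,\beta,O}\ni O$, with boundary $\Sigma_{\tilde\epsilon,\beta,O}$, be the component of the minimizer containing $O$. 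All four properties of Lemma \ref{Lem: characterization} transfer with no change — the identity $2h_{\tilde\epsilon,\beta}'+\tfrac32h_{\tilde\epsilon,\beta}^2=\tfrac32\tilde\epsilon^2>0$ from \eqref{Eq: ODE} still drives the second-variation computation — so $\Sigma_{\tilde\epsilon,\beta,O}$ is a strictly outer-minimizing embedded $2$-sphere enclosing $O$, of mean curvature $h_{\tilde\epsilon,\beta}\circ\rho_\epsilon$, with $\lambda_1(-\Delta+\tfrac12 R_{\tilde\epsilon,\beta,O})\geq\tfrac34\tilde\epsilon^2$ for the scalar curvature $R_{\tilde\epsilon,\beta,O}$ of $\Sigma_{\tilde\epsilon,\beta,O}$; hence by Gromov's band-width estimate \cite{Gromov2018} its diameter is at most $\tfrac{4\pi}{3\tilde\epsilon}$, while the level-set slicing argument — the point at which $A_g>0$ together with $\mathcal H^2_g(\Sigma_{\tilde\epsilon,\beta,O})\geq A_g$ is used — forces $\Sigma_{\tilde\epsilon,\beta,O}$ to meet the fixed compact shell $\{-\mathcal H^2_g(\Sigma_\epsilon)/(\tilde\epsilon A_g)\leq\rho_\epsilon\leq0\}$. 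Consequently all $\Sigma_{\tilde\epsilon,\beta,O}$ lie in a single compact set $K_{\tilde\epsilon}\subset\mathbb R^3\setminus\{O\}$ independent of $\beta$, and $0<h_{\tilde\epsilon,\beta}\circ\rho_\epsilon<2\tilde\epsilon$ on $\Sigma_{\tilde\epsilon,\beta,O}$ for $\beta$ large. The volume bound already appears at this stage: comparing the minimizer with the admissible competitor $\Omega_\epsilon$ yields $\mathcal H^2_g(\partial\Omega_{\tilde\epsilon,\beta})+\int_{\bar\Omega_\epsilon\setminus\Omega_{\tilde\epsilon,\beta}}h_{\tilde\epsilon,\beta}\circ\rho_\epsilon\,\mathrm d\mathcal H^3_g\leq\mathcal H^2_g(\Sigma_\epsilon)$, and combining $\mathcal H^2_g(\partial\Omega_{\tilde\epsilon,\beta})\geq\mathcal H^2_g(\Sigma_{\tilde\epsilon,\beta,O})\geq A_g$, $h_{\tilde\epsilon,\beta}>\tilde\epsilon$ and $\mathcal H^2_g(\Sigma_\epsilon)\leq A_g+\Lambda_0\epsilon^2$ (Lemma \ref{Lem: area estimate}) gives $\mathcal H^3_g(\Omega_\epsilon\setminus\Omega_{\tilde\epsilon,\beta,O})\leq\Lambda_0\epsilon^2/\tilde\epsilon$, once one checks, as for the first property of Lemma \ref{Lem: characterization}, that the minimizer has no component other than $\Omega_{\tilde\epsilon,\beta,O}$.

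Now let $\beta\to+\infty$. Since $h_{\tilde\epsilon,\beta}\to\tilde\epsilon$ uniformly on $K_{\tilde\epsilon}$, the $\Sigma_{\tilde\epsilon,\beta,O}$ form a family of stable surfaces of uniformly bounded area, with mean curvature converging to $\tilde\epsilon$, confined to the fixed compact set $K_{\tilde\epsilon}$; by the standard compactness for minimizing $\mu$-bubbles (almost area-minimizers) in a $3$-manifold a subsequence converges smoothly, with multiplicity one, to a smoothly embedded closed surface $\Sigma_{\tilde\epsilon}=\partial\Omega_{\tilde\epsilon}$ with $\Omega_{\tilde\epsilon}\subset\bar\Omega_\epsilon$, and $\Omega_{\tilde\epsilon}$ minimizes the limiting functional $\Omega\mapsto\mathcal H^2_g(\partial\Omega)+\tilde\epsilon\,\mathcal H^3_g(\bar\Omega_\epsilon\setminus\Omega)$. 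Running the arguments of Lemma \ref{Lem: characterization} with the constant prescribed function $\tilde\epsilon$, $\Sigma_{\tilde\epsilon}$ is a strictly outer-minimizing embedded $2$-sphere of constant mean curvature $\tilde\epsilon$; it cannot touch $\Sigma_\epsilon$ by the strong maximum principle (as $\epsilon>\tilde\epsilon$, a $\tilde\epsilon$-sphere inside cannot be interiorly tangent to an $\epsilon$-sphere), so it is enclosed by $\Sigma_\epsilon$; it still meets the fixed shell and lies at a positive distance from $O$, so it is a nondegenerate sphere and, being a limit of the $O$-enclosing $\Omega_{\tilde\epsilon,\beta,O}$, still encloses $O$; finally the volume bound passes to the limit, $\mathcal H^3_g(\Omega_{\tilde\epsilon,\epsilon})=\mathcal H^3_g(\Omega_\epsilon\setminus\Omega_{\tilde\epsilon})\leq\Lambda_0\epsilon^2/\tilde\epsilon$, which is \eqref{Eq: volume estimate}.

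The essential difficulty is this last step: one must ensure the $\mu$-bubbles neither collapse towards $O$ nor otherwise degenerate as $\beta\to+\infty$, and that the limit keeps its sphere topology, strict outer-minimizing property and volume bound. Non-collapse is exactly where the hypothesis $A_g>0$ enters, through the level-set area comparison — this is the geometric substitute, alluded to in the introduction, for the topological fixing trick of \cite{Zhu2020}. The volume estimate itself, by contrast, is the easy part, dropping straight out of the minimizing inequality and Lemma \ref{Lem: area estimate}.
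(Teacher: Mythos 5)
Your proof takes essentially the same route as the paper: rerun the $\mu$-bubble construction of Proposition~\ref{Prop: sequence} and Lemma~\ref{Lem: characterization} inside $\Omega_\epsilon$ with $\Sigma_\epsilon$ serving as the mean-convex outer barrier in place of $S_{r_0}$, obtain $\Sigma_{\tilde\epsilon,\beta,O}$ with the four characterization properties and a $\beta$-independent confining compact set, and let $\beta\to\infty$. The one organizational difference is where the volume estimate is extracted: you apply the minimality comparison $\mathcal A_{\tilde\epsilon,\beta}(\Omega_{\tilde\epsilon,\beta})\leq\mathcal A_{\tilde\epsilon,\beta}(\Omega_\epsilon)=\mathcal H^2_g(\Sigma_\epsilon)$ at each finite $\beta$ and pass the bound to the limit, whereas the paper first passes to the limiting functional $\mathcal A_{\tilde\epsilon}(\Omega)=\mathcal H^2_g(\partial\Omega)+\tilde\epsilon\,\mathcal H^3_g(\bar\Omega_\epsilon\setminus\Omega)$ and runs the comparison there. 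Either ordering is fine.

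There is, however, one point where your justification is not right as stated. The finite-$\beta$ comparison controls $\mathcal H^3_g(\bar\Omega_\epsilon\setminus\Omega_{\tilde\epsilon,\beta})$ for the \emph{full} minimizer, and you need the corresponding bound with $\Omega_{\tilde\epsilon,\beta,O}$, the $O$-component, in place of $\Omega_{\tilde\epsilon,\beta}$. You flag this and claim it follows ``as for the first property of Lemma~\ref{Lem: characterization}.'' That property and its proof concern the \emph{exterior} $\Omega^e_{\epsilon,\beta}$ being connected: filling in a bounded exterior component $U_b$ changes $\mathcal A$ by $-\mathcal H^2_g(\partial U_b)-\int_{U_b}h<0$, contradicting minimality. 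If instead $U$ is an extra component of the \emph{minimizer itself} (not containing $O$), deleting it changes $\mathcal A$ by $-\mathcal H^2_g(\partial U)+\int_U h$, which has the opposite sign in the second term; minimality actually forces $\int_U h\geq\mathcal H^2_g(\partial U)$, so the variational argument you invoke cannot rule such a component out. Ruling it out requires a separate observation — for instance, an isoperimetric estimate in $\bar B_{r_0}$ forcing any such $U$ to have $\mathcal H^2_g(\partial U)\gtrsim\tilde\epsilon^{-2}$, which is incompatible with $\mathcal H^2_g(\partial\Omega_{\tilde\epsilon,\beta})\leq\mathcal H^2_g(\Sigma_\epsilon)$ once $\tilde\epsilon<\epsilon<\epsilon_0$. (For what it is worth, the paper's own phrasing — that the limit of the $O$-components is itself a minimizer of $\mathcal A_{\tilde\epsilon}$ — hides exactly the same connectedness claim.) So your overall plan is sound, but the parenthetical ``as for the first property of Lemma~\ref{Lem: characterization}'' cites a statement and an argument that do not give what you need, and this step should be supplied with its own reasoning.
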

\begin{proof}
Let us set an appropriate $\mu$-bubble problem similarly as in the proof of Proposition \ref{Prop: sequence}. From a mollification for signed distance function to the surface $\Sigma_{\epsilon}$ we can construct a smooth function $\rho_{\epsilon}:\mathbb R^3\setminus \{O\}\to (-\infty,+\infty)$ satisfying
\begin{itemize}
\item $\rho_\epsilon(x)\to -\infty$ as $x\to O$, $\rho_\epsilon(x)\to +\infty$ as $x\to\infty$, and $\lip\rho_\epsilon<1$;
\item $\Omega_{\epsilon}\setminus\{O\}=\{\rho_\epsilon<0\}$, where $\Omega_\epsilon$ is the region enclosed by $\Sigma_{\epsilon}$.
\end{itemize}
As before, we take 
$$
h_{\tilde\epsilon,\beta}:\left(-\frac{4\beta}{3\tilde\epsilon},+\infty\right)\to (\tilde\epsilon,+\infty),\quad t\mapsto \tilde\epsilon\coth\left(\frac{3}{4}\tilde\epsilon t+\beta\right),
$$
where $\beta$ is a large positive constant such that $h_{\tilde \epsilon,\beta}(0)<\epsilon$. 
Define
$$
M_{\tilde\epsilon,\beta}=\left\{x\in \mathbb R^3\setminus\{O\}\left|\rho_\epsilon(x)>-\frac{4\beta}{3\tilde\epsilon}\right.\right\}\cap \bar \Omega_\epsilon
$$
and
$$
\mathcal C_{\tilde\epsilon,\beta}=\left\{\begin{array}{c}\mbox{Caccioppoli sets $\Omega\subset \bar \Omega_{\epsilon}$ containing the origin $O$}\\
\mbox{such that $\bar \Omega_\epsilon\setminus\Omega$ has compact closure in $M_{\tilde\epsilon,\beta}$.}
\end{array}\right\}.
$$
Consider the functional 
$$
\mathcal A_{\tilde\epsilon,\beta}:\mathcal C_{\tilde\epsilon,\beta}\to \mathbb R,\quad \Omega\mapsto \mathcal H^2_g(\partial\Omega)+\int_{M_{\tilde\epsilon,\beta}}\chi_{\bar \Omega_\epsilon\setminus\Omega}\,h_{\tilde\epsilon,\beta} \circ \rho_\epsilon\,\mathrm d \mathcal H^3_g.
$$
Then we can find a smooth minimizer $\Omega_{\tilde \epsilon,\beta}$ in $\mathcal C_{\tilde \epsilon,\beta}$ of the functional $\mathcal A_{\tilde \epsilon,\beta}$. Denote $\Omega_{\tilde\epsilon,\beta,O}$ to be the component of $\Omega_{\tilde\epsilon,\beta}$ containing the origin $O$. From a similar argument as in the proof of Lemma \ref{Lem: characterization} we see that the surface $\Sigma_{\tilde \epsilon,\beta,O}=\partial \Omega_{\tilde \epsilon,\beta,O}$ is a strictly outer-minimizing $2$-sphere contained in a fixed compact subset $K_{\tilde \epsilon}$ (independent of $\beta$), of which the mean curvature satisfies
$$
\tilde \epsilon<H_{\tilde \epsilon,\beta,O}\leq \tilde \epsilon+o(1)\mbox{ as }\beta \to +\infty.
$$

For the desired surface $\Sigma_{\tilde \epsilon}$ we pick up a sequence $\beta_i\to +\infty$ and up to a subsequence the region $\Omega_{\tilde \epsilon,\beta_i,O}$ converges to a smooth minimizer $\Omega_{\tilde \epsilon}$ in
$$
\mathcal C_{\tilde\epsilon}=\left\{\begin{array}{c}\mbox{Caccioppoli sets $\Omega\subset \bar \Omega_{\epsilon}$ containing the origin $O$}\\
\mbox{such that $\bar \Omega_\epsilon\setminus\Omega$ has compact closure in $\bar\Omega_{\epsilon}\setminus\{O\}$.}
\end{array}\right\}
$$
of the following functional
$$
\mathcal A_{\tilde\epsilon}(\Omega)= \mathcal H^2_g(\partial\Omega)+\tilde \epsilon\mathcal H^3_g(\bar \Omega_{\epsilon}\setminus \Omega)\mbox{ for }\Omega\in \mathcal C_{\tilde\epsilon}.
$$
It follows from the uniform area bound $\mathcal H^2(\Sigma_{\tilde \epsilon,\beta_i,O})\leq \mathcal H^2(\Sigma_{\epsilon})$ along with the uniform bound for mean curvature $H_{\tilde\epsilon,\beta_i,O}$ as well as the compactness of $K_\epsilon$ that we have a uniform curvature estimate
$|A_{\tilde \epsilon,\beta_i,O}|\leq C$ with $C$ independent of $i$. As a result, $\partial\Omega_{\tilde \epsilon,\beta_i,O}$ converges smoothly to $\partial\Omega_{\tilde \epsilon}$ and so the boundary $\Sigma_{\tilde \epsilon}:=\partial\Omega_{\tilde \epsilon}$ is a $2$-sphere with constant mean curvature $\tilde\epsilon$. As for $\Sigma_{\tilde \epsilon,\beta,O}$ we can verify that the surface $\Sigma_{\tilde \epsilon}$ is strictly outer-minimizing.

The desired volume control comes from the strictly outer-minimizing property of $\Sigma_{\tilde \epsilon}$. Notice that from $\mathcal A_{\tilde \epsilon}(\Omega_{\tilde \epsilon})\leq \mathcal A_{\tilde \epsilon}(\Omega_{\epsilon})$ we have
$$
\mathcal H^2_g(\Sigma_{\tilde\epsilon})+\tilde \epsilon \mathcal H^3_g(\Omega_{\tilde\epsilon,\epsilon})\leq \mathcal H^2_g(\Sigma_{\epsilon}).
$$
We obtain \eqref{Eq: volume estimate} by combining above inequality with the fact $\mathcal H^2_g(\Sigma_{\tilde\epsilon})\geq A_g$ and  the area estimate for $\Sigma_\epsilon$ from Lemma \ref{Lem: area estimate}.
\end{proof}

Now we are ready to prove the rigidity part of Theorem \ref{Thm: main}.
\begin{proof}[Proof of rigidity]
As in the proof of Proposition \ref{Prop: sequence}, we fix a Euclidean ball $B_{r_0}$ such that there is a mean-convex foliation outside $B_{r_0}$. Let $\epsilon_0$ and $\Lambda_0$ be the same as in Lemma \ref{Lem: area estimate}. Fix a small $\epsilon\in (0,\epsilon_0)$. Through solving the $\mu$-bubble problem in the proof of Proposition \ref{Prop: sequence} and taking limit region as in the proof of Lemma \ref{Lem: volume estimate}, we can find a strictly outer-minimizing $2$-sphere $\Sigma_{\epsilon}$ contained in $B_{r_0}$ with constant mean curvature $\epsilon$ enclosing the origin $O$.

Fix a $\gamma\in(1,2)$. From Lemma \ref{Lem: volume estimate} we can construct another strictly outer-minimizing $2$-sphere $\Sigma_{\epsilon^\gamma}$ inside $\Sigma_{\epsilon}$ with constant mean curvature $\epsilon^\gamma$ such that we have
$$
\mathcal H^3_g(\Omega_{\epsilon^\gamma,\epsilon})\leq \Lambda_0 \epsilon^{2-\gamma}.
$$
Here and in the sequel $\Omega_{\tau,\delta}$ is always denoted to be the region bounded by $\Sigma_{\tau}$ and $\Sigma_\delta$. From induction we can construct a sequence of strictly outer-minimizing $2$-spheres $\left\{\Sigma_{\epsilon^{\gamma^k}}\right\}_{k\in \mathbf N}$ enclosing the origin $O$, which satisfy
\begin{itemize}
\item $\Sigma_{\epsilon^{\gamma^{k+1}}}$ is enclosed by $\Sigma_{\epsilon^{\gamma^k}}$;
\item $\Sigma_{\epsilon^{\gamma^k}}$ has constant mean curvature $\epsilon^{\gamma^k}$;
\item we have
$$
\mathcal H^3_g\left(\Omega_{\epsilon^{\gamma^{k+1}},\epsilon^{\gamma^k}}\right)\leq \Lambda_0\left(\epsilon^{\gamma^k}\right)^{2-\gamma}=\Lambda_0\epsilon^{\gamma^k(2-\gamma)}.
$$
\end{itemize}
As a consequence, we obtain
\begin{equation}\label{Eq: uniform area bound}
\mathcal H^3_g\left(\Omega_{\epsilon^{\gamma^k},\epsilon}\right)\leq \Lambda_0\sum_{i=1}^{k-1}\epsilon^{\gamma^i(2-\gamma)}<\Lambda_0\epsilon^{\gamma(2-\gamma)}\left(1-\epsilon^{(\gamma-1)(2-\gamma)}\right)^{-1},
\end{equation}
where we use the elementary inequality
$$
\gamma^i\geq 1+i(\gamma-1)\mbox{ for }i\in \mathbf N_+\mbox{ and }\gamma\in (1,2).
$$
Based on the uniform area bound \eqref{Eq: uniform area bound} we are able to show that all surfaces $\Sigma_{\epsilon^{\gamma^k}}$ intersect with a fixed compact subset $K$ independent of $k$. As before, we pick up a modified (signed) distance function $\rho_{\epsilon}:\mathbb R^3\setminus\{O\}\to (-\infty,+\infty)$ to the surface $\Sigma_\epsilon$ with $\lip \rho_\epsilon<1$, $\rho_{\epsilon}^{-1}(0)=\Sigma_\epsilon$ and $\rho_\epsilon<0$ inside $\Sigma_{\epsilon}$. Given any $T<0$ if the surface $\Sigma_{\epsilon^{\gamma^k}}$ does not intersect $\rho_{\epsilon}^{-1}\left([T,0]\right)$, then we can deduce
\[
\begin{split}
\mathcal H^3_g\left(\Omega_{\epsilon^{\gamma^k},\epsilon}\right)&\geq \int_{\rho_\epsilon^{-1}([T,0])}|\mathrm d\rho_\epsilon|_g\,\mathrm d\mathcal H^3_g\\
&=\int_T^0\mathrm ds\int_{\rho_{\epsilon}^{-1}(s)}\mathrm d\mathcal H^2_g\\
&\geq A_g|T|.
\end{split}
\]
Combining this with \eqref{Eq: uniform area bound} we conclude that the surface $\Sigma_{\epsilon^{\gamma^k}}$ intersects with the compact subset
$$
K=\rho_{\epsilon}^{-1}\left([T_0,0]\right)\mbox{ with }T_0=-\frac{\Lambda_0\epsilon^{\gamma(2-\gamma)}}{A_g\left(1-\epsilon^{(\gamma-1)(2-\gamma)}\right)}.
$$

Recall from Lemma \ref{Lem: area estimate} that we have $\mathcal H^2_g(\Sigma_{\epsilon^{\gamma^k}})\leq A_g+\Lambda_0\epsilon_0^2$. Since $\Sigma_{\epsilon^{\gamma^k}}$ are stable constant mean curvature surfaces of which the mean curvatures are uniformly bounded, it follows from \cite[Theorem 3.6]{ZZ2020} that we have a uniform curvature estimate for these surfaces $\Sigma_{\epsilon^{\gamma^k}}$ in any compact subset of $\mathbb R^3\setminus\{O\}$. Consequently, these surfaces converge smoothly to an area-minimizing surface $\Sigma_{min}$, whose area is no greater than $A_g$. In particular, the stability of $\Sigma_{min}$ yields
$$
\int_{\Sigma_{min}}|\nabla\phi|^2+K\phi^2\,\mathrm d\mathcal H^2_g\geq \frac{1}{2}\int_{\Sigma_{min}}(R(g)+|A|^2)\phi^2\,\mathrm d\mathcal H^2_g\geq 0
$$
for any $\phi\in C^\infty_0(\Sigma_{min})$, where $K$ is the Gaussian curvature of $\Sigma_{min}$ with the induced metric. It follows from \cite[Theorem 8.11]{GL1983} by Gromov and Lawson that $\Sigma_{min}$ is closed due to its finite area. Therefore, $\Sigma_{min}$ turns out to be a minimal $2$-sphere with area $A_g$ enclosing the origin $O$.

Through minimizing the area functional outside $\Sigma_{min}$ it is standard to find a strictly outer-minimizing surface $\Sigma_h$ homologous to $\Sigma_{min}$ with area $A_g$ that consists of minimal $2$-spheres. From the definition of $A_g$ it is clear that $\Sigma_{h}$ can only be one minimal $2$-sphere enclosing the origin $O$. From the work \cite{HI2001} we conclude that the closed region outside $\Sigma_h$ is isometric to the half Schwarzschild manifold with mass
$$
m=\sqrt{\frac{A_g}{16\pi}}>0.
$$
This completes the proof.
\end{proof}

\appendix
\section{Trumpet-like AF manifolds}\label{Sec: trumpet-like}
\begin{lemma}
There is a complete and rotationally symmetric metric $g$ on $\mathbb R^n\setminus\{O\}$ such that
\begin{itemize}
\item $g$ is AF around the infinity of $\mathbb R^n$;
\item the scalar curvature $R(g)$ is nonnegative;
\item for any $\rho>0$ the coordinate sphere $S_\rho=\{x\in\mathbb R^n:|x|=\rho\}$ is mean-convex with respect to the outward unit normal.
\end{itemize}
\end{lemma}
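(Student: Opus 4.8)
The plan is to produce $g$ as an explicit rotationally symmetric warped product and to reduce all three requirements to pointwise conditions on a single profile function. Identify $\mathbb{R}^n\setminus\{O\}$ with $\mathbb{R}\times\mathbb{S}^{n-1}$ and look for a metric of the form $g=\mathrm ds^2+\phi(s)^2 g_{\mathbb{S}^{n-1}}$ with $\phi\colon\mathbb{R}\to(0,+\infty)$ smooth and strictly increasing, where $s$ is arc length along the radial geodesics and the level sets $\{s=\mathrm{const}\}$ are, after an increasing reparametrization of $|x|$, the coordinate spheres. Two standard computations do most of the work: first, the mean curvature of $\{s=\mathrm{const}\}$ with respect to the outward normal $\partial_s$ is $H=(n-1)\phi'/\phi$, so the mean-convexity requirement is equivalent to $\phi'>0$ on all of $\mathbb{R}$; second,
\[
R(g)=(n-1)\left(-2\frac{\phi''}{\phi}+(n-2)\frac{1-(\phi')^2}{\phi^2}\right),
\]
and the ``mass function'' $m(s):=\phi(s)^{n-2}\bigl(1-(\phi'(s))^2\bigr)$ satisfies $m'(s)=\tfrac{1}{n-1}\phi(s)^{n-1}\phi'(s)\,R(g)$; hence, once $\phi'>0$ is in force, $R(g)\ge0$ is equivalent to $m$ being non-decreasing. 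Completeness is automatic for any positive $\phi$ (the $s$-lines are complete and the fibres are compact), so the whole problem reduces to exhibiting a single smooth $\phi$ with $\phi'>0$, with $m$ non-decreasing, and with the correct asymptotics at $s\to+\infty$.

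For the outer region I would take $g$ to be an exterior Schwarzschild metric: fix $m_1>0$ and set $\phi'=\sqrt{1-m_1\phi^{2-n}}$ for $s$ large, with $\phi$ staying above the threshold $m_1^{1/(n-2)}$. There $R(g)\equiv0$, $0<\phi'<1$, and in the area-radius gauge $|x|=\phi$ one has $g=(1-m_1|x|^{2-n})^{-1}\mathrm d|x|^2+|x|^2 g_{\mathbb{S}^{n-1}}$, which manifestly satisfies the decay \eqref{Eq: decay} and has $R(g)=0\in L^1$ near the infinity of $\mathbb{R}^n$. For the inner end I would take the model $\phi(s)=\ell+e^{\lambda s}$ for $s\le0$, with $\ell>0$ and $\lambda^2=\tfrac{n-2}{2\ell+n}\in(0,1)$: then $\phi'=\lambda e^{\lambda s}>0$, the end is complete and asymptotically cylindrical over $\mathbb{S}^{n-1}(\ell)$, and writing $x=e^{\lambda s}\in(0,1]$ one checks that $R(g)\ge0$ reduces to $(n-2)-2\lambda^2\ell x-n\lambda^2 x^2\ge0$, an expression decreasing in $x$ that vanishes exactly at $x=1$ for the chosen $\lambda$, so $R(g)>0$ for $s<0$. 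The value $\lambda^2=\tfrac{n-2}{2\ell+n}$ is engineered so that $\phi,\phi',\phi''$ of the inner model agree at $s=0$ with those of the exterior Schwarzschild profile of mass $m_1:=(\ell+1)^{n-2}(1-\lambda^2)$ normalized by $\phi(0)=\ell+1$ (and $\ell+1>m_1^{1/(n-2)}$ since $\lambda^2\in(0,1)$), so the two pieces glue to a $C^{2}$ metric with $m$ non-decreasing and $C^1$. A standard mollification of $m$ in a small neighbourhood of $s=0$ (preserving $m'\ge0$ and the bound $m\le m_1$), followed by re-solving $\phi'=\sqrt{1-m(s)\phi^{2-n}}$, then upgrades this to a globally smooth $\phi$ with $\phi'>0$ and $m'\ge0$ on all of $\mathbb{R}$, still equal to the cylindrical model near $s=-\infty$ and to an exterior Schwarzschild metric of mass $m_1$ near $s=+\infty$.

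With such a $\phi$ in hand the resulting $g$ on $\mathbb{R}\times\mathbb{S}^{n-1}\cong\mathbb{R}^n\setminus\{O\}$ is complete and rotationally symmetric by construction, is asymptotically flat around the infinity of $\mathbb{R}^n$ because it coincides with an exterior Schwarzschild metric there, has $R(g)\ge0$ because $m$ is non-decreasing, and has every coordinate sphere mean-convex with respect to the outward normal because $\phi'>0$; since $\phi$ is in fact strictly increasing, the coordinate spheres foliate the manifold by strictly mean-convex hypersurfaces, which by the maximum principle rules out closed minimal hypersurfaces, as claimed in the introduction. I expect the only delicate point to be the transition near $s=0$: one must interpolate the mass function monotonically between the cylindrical value and the Schwarzschild value while ensuring that $\phi$ never drops to the threshold $m(s)^{1/(n-2)}$, so that $\phi'=\sqrt{1-m\phi^{2-n}}$ stays positive — which is automatic here since $m\le m_1$ along the transition and $\phi(0)=\ell+1>m_1^{1/(n-2)}$ with $\phi$ increasing — and that the metric is genuinely $C^\infty$ across the join, which the mollification step guarantees. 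Everything else is a routine verification.
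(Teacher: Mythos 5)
Your proof is correct, and it takes a genuinely different route from the paper's. The paper works in the conformal gauge $g=u_\alpha^{4/(n-2)}(\mathrm dr^2+r^2 g_{\mathbb S^{n-1}})$ and constructs $u_\alpha$ by interpolating between the \emph{derivatives} of the cylindrical conformal factor $r^{(2-n)/2}$ and the Schwarzschild conformal factor $1+r^{2-n}$ via a cut-off; nonnegativity of $R(g)$ then reduces to superharmonicity $\Delta u_\alpha\le 0$, and mean-convexity to $(u_\alpha^{2/(n-2)}r)'>0$, the latter verified by taking the additive constant $\alpha$ large. You instead work in arc-length warped-product gauge, encode $R(g)\ge 0$ via monotonicity of the quasi-local mass $m(s)=\phi^{n-2}(1-(\phi')^2)$, encode mean-convexity via $\phi'>0$, and glue an explicit asymptotically cylindrical profile $\phi=\ell+e^{\lambda s}$ to an exterior Schwarzschild profile. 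Your choice $\lambda^2=\tfrac{n-2}{2\ell+n}$ does exactly what you claim: it makes $R(g)>0$ for $s<0$, $R(g)\to 0^+$ at $s=0^-$, and forces $C^2$ matching of $\phi$ with the Schwarzschild profile of mass $m_1=(1-\lambda^2)(\ell+1)^{n-2}$ (I checked $\phi,\phi',\phi''$ all agree). The trade-off is that the paper's construction is smooth by fiat (the interpolation happens at the level of first derivatives of a smooth cut-off), whereas your glued $\phi$ is only $C^2$ at $s=0$ (the third derivatives do not match) and you must mollify. Your mollification sketch is a little thin: a plain convolution of $m$ perturbs it on all of $(-\infty,0)$, which is dangerous near $s=-\infty$ where $\phi^{n-2}-m\to 0$, so one should instead localize the modification to a compact interval; a clean way is to take a smooth non-increasing cut-off $\chi$ with $\chi\equiv 1$ on $(-\infty,-\delta/2]$, $\chi\equiv 0$ on $[-\delta/4,\infty)$, and set $\hat m=\chi m+(1-\chi)m_1$, which is smooth, non-decreasing (since $\chi'\le 0$ and $m\le m_1$), equals $m$ on $s\le-\delta/2$ and $m_1$ on $s\ge -\delta/4$, and then re-solve $\phi'=\sqrt{1-\hat m\,\phi^{2-n}}$; for $\delta$ small one has $\phi^{n-2}>m_1\ge\hat m$ on $[-\delta/2,\infty)$, so $\phi'>0$ persists. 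With that made explicit, your argument is complete, and arguably cleaner conceptually: mean-convexity is built in rather than estimated, and nonnegative scalar curvature becomes transparent monotonicity of $m(s)$.
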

\begin{proof}
The idea is to bend the Schwarzschild metric such that one of its two ends is asymptotic to a cylinder. In particular, we take 
$$
u_1(r)=r^{\frac{2-n}{2}}\mbox{ and } u_2(r)=1+r^{2-n}\mbox{ with }r=|x|\in (0,+\infty),
$$
which are conformal factors of the cylindrical metric and the Schwarzschild metric respectively. Since we have
$$
u_1'(r)=\left(1-\frac{n}{2}\right)r^{-\frac{n}{2}}\mbox{ and }u_2'(r)=(2-n)r^{1-n},
$$
it follows from the fact $n\geq 3$ that we can find a positive constant $r_0$ such that $u_1'(r)>u_2'(r)$ for all $r\in(0,2r_0)$. Fix a smooth cut-off function 
$$\zeta:(0,+\infty)\to [0,1]$$ such that
$\zeta(t)\equiv 1$ when $0\leq t\leq r_0$, $\zeta(t)\equiv 0$ when $t\geq 2r_0$ and $\zeta'(t)\leq 0$. 

We define
$$
u_\alpha(r)=\alpha+u_1(r_0)-\int_{r}^{+\infty}[\zeta(s)u_1'(s)+(1-\zeta(s))u_2'(s)]\,\mathrm ds,
$$
where $\alpha$ is a positive constant to be determined later. Consider the metric 
$$
g=u_\alpha^{\frac{4}{n-2}}(\mathrm dr^2+r^2g_{\mathbb S^{n-1}}).
$$
Since we have $u_\alpha(r)\geq u_1(r)$ for $r\in (0,r_0)$ and $u_\alpha(r)\geq u_2(r)$ for $r\geq 2r_0$, the metric $g$ is complete. Moreover, it is rotationally symmetric from its definition. Next let us verify the desired properties one by one.

To see that $g$ is AF around the infinity of $\mathbb R^n$, we notice that $u_\alpha$ has the expression
$$
u_\alpha(r)=\alpha_0+r^{2-n}\mbox{ when }r\geq 2r_0
$$
for some positive constant $\alpha_0$. After changing a coordinate chart around the infinity the metric $g$ turns out to be some Schwarzschild metric, which is of course AF.

To verify the nonnegativity of $R(g)$, we compute
\begin{equation}\label{Eq: laplacian}
\begin{split}
\Delta u_\alpha&=u_{\alpha}''(r)+u_\alpha'(r)\cdot\frac{n-1}{r}\\
&=\zeta(r)\Delta u_1+(1-\zeta(r))\Delta u_2+\zeta'(r)\left(u_1'(r)-u_2'(r)\right).
\end{split}
\end{equation}
Since the cylindrical metric and Schwarzschild metric both have nonnegative scalar curvature, we see $\Delta u_1\leq 0$ and $\Delta u_2\leq 0$. Since $u_1'(r)>u_2'(r)$ holds in the support of $\zeta'(r)$ and we have $\zeta'\leq 0$, the third term in the second line of \eqref{Eq: laplacian} is also non-positive. This means $\Delta u_\alpha\leq 0$ and so the scalar curvature 
$$
R(g)=-\frac{4(n-1)}{n-2}u_\alpha^{-\frac{n+2}{n-2}}\Delta u_\alpha
$$
is nonnegative.

To verify the mean-convexity of the coordinate spheres $S_\rho$ we compute
\[
\left(u_\alpha^{\frac{2}{n-2}}r\right)'=u_\alpha^{\frac{4-n}{n-2}}\left(u_\alpha(r)+\frac{2}{n-2}ru_\alpha'(r)\right).
\]
When $0<r<r_0$ we have
$$
u_\alpha(r)+\frac{2}{n-2}ru_\alpha'(r)>\alpha+u_1(r)+\frac{2}{n-2}ru_1'(r)=\alpha>0.
$$
When $r>2r_0$ we have
$$
u_\alpha(r)+\frac{2}{n-2}ru_\alpha'(r)>\alpha+1-r^{2-n}
$$
which is positive if we choose $\alpha$ to be larger than $(2r_0)^{2-n}$.
When $r_0\leq r\leq 2r_0$ we have
$$
u_\alpha(r)+\frac{2}{n-2}ru_\alpha'(r)>\alpha-\frac{2}{n-2}\|ru_\alpha'(r)\|_{L^\infty([r_0,2r_0])}.
$$
A straightforward computation shows
\[
|ru_\alpha'(r)|\leq |ru_1'(r)|+|ru_2'(r)|\leq C(n)\left(r_0^{\frac{2-n}{2}}+r_0^{2-n}\right)
\]
for a universal constant $C(n)$. By choosing $\alpha$ large enough, we can guarantee
$$
\left(u_\alpha^{\frac{2}{n-2}}r\right)'(r)>0\mbox{ for all }r\in(0,+\infty)
$$
and consequencely all coordinate spheres $S_\rho$ are mean-convex with respect to the outward unit normal.
\end{proof}


\begin{thebibliography}{AMMO22}

\bibitem[ADM61]{ADM1961}
R.~Arnowitt, S.~Deser, and C.~W. Misner.
\newblock Coordinate invariance and energy expressions in general relativity.
\newblock {\em Phys. Rev. (2)}, 122:997--1006, 1961.

\bibitem[AMMO22]{AMMO2022}
Virginia {Agostiniani}, Carlo {Mantegazza}, Lorenzo {Mazzieri}, and Francesca
  {Oronzio}.
\newblock {Riemannian Penrose inequality via Nonlinear Potential Theory}.
\newblock {\em preprint}, arXiv:2205.11642.

\bibitem[BL09]{BL2009}
Hubert~L. Bray and Dan~A. Lee.
\newblock On the {R}iemannian {P}enrose inequality in dimensions less than
  eight.
\newblock {\em Duke Math. J.}, 148(1):81--106, 2009.

\bibitem[Bra01]{Bray2001}
Hubert~L. Bray.
\newblock Proof of the {R}iemannian {P}enrose inequality using the positive
  mass theorem.
\newblock {\em J. Differential Geom.}, 59(2):177--267, 2001.

\bibitem[CL20]{CL2020}
Otis {Chodosh} and Chao {Li}.
\newblock {Generalized soap bubbles and the topology of manifolds with positive
  scalar curvature}.
\newblock {\em preprint}, arXiv:2008.11888.

\bibitem[GL83]{GL1983}
Mikhael Gromov and H.~Blaine Lawson, Jr.
\newblock Positive scalar curvature and the {D}irac operator on complete
  {R}iemannian manifolds.
\newblock {\em Inst. Hautes \'{E}tudes Sci. Publ. Math.}, (58):83--196 (1984),
  1983.

\bibitem[Gro18]{Gromov2018}
Misha Gromov.
\newblock Metric inequalities with scalar curvature.
\newblock {\em Geom. Funct. Anal.}, 28(3):645--726, 2018.

\bibitem[{Gro}19]{Gromov2019}
Misha {Gromov}.
\newblock {Four Lectures on Scalar Curvature}.
\newblock {\em preprint}, arXiv: 1908.10612.

\bibitem[Hat07]{Hatcher2007}
Allen Hatcher.
\newblock Notes on Basic 3-Manifold Topology.
\newblock {\em online}, https:// pi.math.cornell.edu/\url{~}hatcher/3M/3Mfds.pdf, 2007.

\bibitem[HI01]{HI2001}
Gerhard Huisken and Tom Ilmanen.
\newblock The inverse mean curvature flow and the {R}iemannian {P}enrose
  inequality.
\newblock {\em J. Differential Geom.}, 59(3):353--437, 2001.

\bibitem[HMT22]{HML2022}
Sven {Hirsch}, Pengzi {Miao}, and Luen-Fai {Tam}.
\newblock {Monotone quantities of $p$-harmonic functions and their
  applications}.
\newblock {\em preprint}, arXiv:2211.06939.

\bibitem[LLU22]{LLU2022}
Dan~A. {Lee}, Martin {Lesourd}, and Ryan {Unger}.
\newblock {Density and positive mass theorems for incomplete manifolds}.
\newblock {\em preprint}, arXiv:2201.01328.

\bibitem[LUY21]{LUY2021}
Martin {Lesourd}, Ryan {Unger}, and Shing-Tung {Yau}.
\newblock {The Positive Mass Theorem with Arbitrary Ends}.
\newblock {\em preprint}, arXiv:2103.02744.

\bibitem[Sch89]{Schoen1989}
Richard Schoen.
\newblock Variational theory for the total scalar curvature functional for
  {R}iemannian metrics and related topics.
\newblock In {\em Topics in calculus of variations ({M}ontecatini {T}erme,
  1987)}, volume 1365 of {\em Lecture Notes in Math.}, pages 120--154.
  Springer, Berlin, 1989.

\bibitem[SY79]{SY1979}
Richard Schoen and Shing~Tung Yau.
\newblock On the proof of the positive mass conjecture in general relativity.
\newblock {\em Comm. Math. Phys.}, 65(1):45--76, 1979.

\bibitem[SY81]{SY1981}
Richard Schoen and Shing~Tung Yau.
\newblock Proof of the positive mass theorem. {II}.
\newblock {\em Comm. Math. Phys.}, 79(2):231--260, 1981.

\bibitem[SY88]{SY1988}
R.~Schoen and S.-T. Yau.
\newblock Conformally flat manifolds, {K}leinian groups and scalar curvature.
\newblock {\em Invent. Math.}, 92(1):47--71, 1988.

\bibitem[SY94]{SY1994}
R.~Schoen and S.-T. Yau.
\newblock {\em Lectures on differential geometry}.
\newblock Conference Proceedings and Lecture Notes in Geometry and Topology, I.
  International Press, Cambridge, MA, 1994.
\newblock Lecture notes prepared by Wei Yue Ding, Kung Ching Chang [Gong Qing
  Zhang], Jia Qing Zhong and Yi Chao Xu, Translated from the Chinese by Ding
  and S. Y. Cheng, With a preface translated from the Chinese by Kaising Tso.

\bibitem[SY22]{SY2020}
Richard Schoen and Shing-Tung Yau.
\newblock Positive scalar curvature and minimal hypersurface singularities.
\newblock In {\em Surveys in differential geometry 2019. {D}ifferential
  geometry, {C}alabi-{Y}au theory, and general relativity. {P}art 2}, volume~24
  of {\em Surv. Differ. Geom.}, pages 441--480. Int. Press, Boston, MA, [2022]
  \copyright 2022.

\bibitem[Wit81]{Witten1981}
Edward Witten.
\newblock A new proof of the positive energy theorem.
\newblock {\em Comm. Math. Phys.}, 80(3):381--402, 1981.

\bibitem[{Zhu}20]{Zhu2020}
Jintian {Zhu}.
\newblock {Rigidity results for complete manifolds with nonnegative scalar
  curvature}.
\newblock {\em preprint}, arXiv:2008.07028.

\bibitem[Zhu21]{Zhu2021}
Jintian Zhu.
\newblock Width estimate and doubly warped product.
\newblock {\em Trans. Amer. Math. Soc.}, 374(2):1497--1511, 2021.

\bibitem[{Zhu}22]{Zhu2022}
Jintian {Zhu}.
\newblock {Positive mass theorem with arbitrary ends and its application}.
\newblock {\em preprint}, arXiv:2204.05491.

\bibitem[ZZ20]{ZZ2020}
Xin Zhou and Jonathan Zhu.
\newblock Existence of hypersurfaces with prescribed mean curvature
  {I}---generic min-max.
\newblock {\em Camb. J. Math.}, 8(2):311--362, 2020.

\end{thebibliography}

\end{document}